\definecolor{verylight}{gray}{0.97}
\definecolor{light}{gray}{0.9}
\definecolor{medium}{gray}{0.85}
\definecolor{dark}{gray}{0.6}
 \def\frk{\mathfrak}               
 \def\mm{{\frk m}}
 \def\G{{\mathcal G}}
 \def\P{{\mathcal P}}
  \def\xb{{\mathbf x}}
  \def\Soc{{\mathbf Soc}}
 \def\opn#1#2{\def#1{\operatorname{#2}}} 
 \opn\chara{char} \opn\length{\ell} \opn\pd{pd} \opn\rk{rk}
 \opn\projdim{proj\,dim} \opn\injdim{inj\,dim} \opn\rank{rank}
 \opn\depth{depth} \opn\grade{grade} \opn\height{height}
 \opn\embdim{emb\,dim} \opn\codim{codim}
 \opn\Tr{Tr} \opn\bigrank{big\,rank}
 \opn\superheight{superheight}\opn\lcm{lcm}
 \opn\trdeg{tr\,deg}
 \opn\reg{reg} \opn\lreg{lreg} \opn\ini{in} \opn\lpd{lpd}
 \opn\mi{mi}
 \opn\size{size} \opn\sdepth{sdepth}
 \opn\link{link}\opn\fdepth{fdepth}\opn\lex{lex}
 \opn\tr{tr}
 \opn\type{type}
 \opn\gap{gap}
 \opn\arithdeg{arith-deg}
 \opn\HS{HS}
 \opn\mgrade{m-grade}
 \opn\div{div} \opn\Div{Div} \opn\cl{cl} \opn\Cl{Cl}
 \opn\Spec{Spec} \opn\Supp{Supp} \opn\supp{supp} \opn\Sing{Sing}
 \opn\Ass{Ass} \opn\Min{Min}\opn\Mon{Mon}
 \opn\Ann{Ann} \opn\Rad{Rad} \opn\Soc{Soc}
 \opn\Im{Im} \opn\Ker{Ker} \opn\Coker{Coker} \opn\Am{Am}
 \opn\Hom{Hom} \opn\Tor{Tor} \opn\Ext{Ext} \opn\End{End}
 \opn\Aut{Aut} \opn\id{id}
 \opn\nat{nat}
 \opn\pff{pf}
 \opn\Pf{Pf} \opn\GL{GL} \opn\SL{SL} \opn\mod{mod} \opn\ord{ord}
 \opn\Gin{Gin} \opn\Hilb{Hilb}\opn\sort{sort}
 \opn\PF{PF}\opn\Ap{Ap}
 \opn\mult{mult}
 \opn\bight{bight}
 \opn\a{a}
 \opn\Ap{Ap}
 \opn\aff{aff}
 \opn\relint{relint} \opn\st{st}
 \opn\lk{lk} \opn\cn{cn} \opn\core{core} \opn\vol{vol}  \opn\inp{inp} \opn\nilpot{nilpot}
 \opn\link{link} \opn\star{star}\opn\lex{lex}\opn\set{set}
 \opn\width{wd}
 \opn\Fr{F}
 \opn\QF{QF}
 \opn\G{G}
 \opn\type{type}\opn\res{res}
 \opn\conv{conv}
 \opn\Ind{Ind}
 \opn\cone{cone}
 \opn\im{im}
 \opn\gr{gr}
 \def\pot#1#2{#1[\kern-0.28ex[#2]\kern-0.28ex]}
 \opn\dirlim{\underrightarrow{\lim}}
 \opn\inivlim{\underleftarrow{\lim}}
 \let\union=\cup
 \let\iso=\cong
 \let\Union=\bigcup
 \let\to=\rightarrow
 \def\Implies{\ifmmode\Longrightarrow \else
         \unskip${}\Longrightarrow{}$\ignorespaces\fi}
 \def\implies{\ifmmode\Rightarrow \else
         \unskip${}\Rightarrow{}$\ignorespaces\fi}
 \def\iff{\ifmmode\Longleftrightarrow \else
         \unskip${}\Longleftrightarrow{}$\ignorespaces\fi}
 \newtheorem{Theorem}{Theorem}[section]
 \newtheorem{Corollary}[Theorem]{Corollary}
 \newtheorem{Proposition}[Theorem]{Proposition}
 \newtheorem{Remark}[Theorem]{Remark}
 \newtheorem{Question}[Theorem]{Question}
 \let\epsilon\varepsilon
 \let\kappa=\varkappa
 \def\qed{\ifhmode\textqed\fi
       \ifmmode\ifinner\quad\qedsymbol\else\dispqed\fi\fi}
 \def\textqed{\unskip\nobreak\penalty50
        \hskip2em\hbox{}\nobreak\hfil\qedsymbol
        \parfillskip=0pt \finalhyphendemerits=0}
 \def\dispqed{\rlap{\qquad\qedsymbol}}
 \opn\dis{dis}
 \def\pnt{{\raise0.5mm\hbox{\large\bf.}}}
 \opn\Lex{Lex}
\begin{document}

\title {Systems of parameters and the Cohen--Macaulay property}

\author {J\"urgen Herzog and  Somayeh Moradi}

\address{J\"urgen Herzog, Fachbereich Mathematik, Universit\"at Duisburg-Essen, Campus Essen, 45117
Essen, Germany} \email{juergen.herzog@uni-essen.de}

\address{Somayeh Moradi, Department of Mathematics, School of Science, Ilam University,
P.O.Box 69315-516, Ilam, Iran}
\email{so.moradi@ilam.ac.ir}

\begin{abstract}
We recall a  numerical criteria for Cohen--Macaulayness related to system of parameters, and introduce monomial ideals of K\"onig type which include the edge ideals of K\"onig graphs. We show that a monomial ideal is of K\"onig type if and only if its corresponding residue class ring admits a system of parameters whose elements are of the form $x_i-x_j$. This provides an algebraic characterization of K\"onig graphs. We use this special parameter systems for the study of the edge ideal of K\"onig graphs and the study of the order complex of a  certain family of posets. Finally,  for any simplicial complex $\Delta$ we introduce a system of parameters for $K[\Delta]$ with a universal construction principle, independent of the base field and only dependent on the faces of $\Delta$. This system of parameters is an efficient tool to test Cohen--Macaulayness of the Stanley--Reisner ring of a simplicial complex.
\end{abstract}

\keywords{System of parameters, Cohen-Macaulay, K\"{o}nig graph}

\subjclass[2010]{Primary 13H10; Secondary 05C25}

\maketitle

\setcounter{tocdepth}{1}

 \section*{Introduction}
Systems of parameters play an important role in dimension theory. As a consequence of Krull's generalized principal ideal theorem it can be seen that in a Noetherian local ring $(R,\mm)$ with $\dim R=d$, there exist elements $f_1,\ldots,f_d\in\mm$ with $\dim R/(f_1,\ldots,f_d)=0$. Such a sequence of elements of $R$ is called a system of parameters, or sop for short. A similar statement holds for standard graded $K$-algebras with  $K$ a field. In our applications  we mainly consider such algebras.

One of the central problems in Combinatorial Commutative Algebra is to show that a certain $K$-algebra attached to a combinatorial object is Cohen--Macaulay. Usually the Cohen--Macaulay property has a nice combinatorial interpretation. In the case that the defining ideal of the algebra is a monomial ideal,  Hochster's formula \cite{Ho} and its extension by Takayama \cite{Ta} are powerful tools to investigate the homological properties of the algebra. In the case that the defining ideal is a binomial prime ideal,  one may use the squarefree divisor complex \cite{BH2} or one may use Gr\"obner basis theory to reduce the problem to the case of monomial ideals.

In this paper we propose another approach which is based on the  basic fact that $R$ is Cohen--Macaulay if and only if one (equivalently all) of the sop's of $R$ form(s) a regular sequence. This approach confronts us with two problems. The first problem is to find a suitable sop, the second is to decide whether the given sop forms a regular sequence. Regarding the first problem, Stanley \cite[Proposition 4.3]{St1} finds an explicit special sop for the Stanley--Reisner ring of any balanced simplicial complex. This also  includes the order complexes.   In the cases considered here we also use special sop's.

In the first section of this paper however we  first deal with the second problem. Based on results of Serre \cite{Se}, see also \cite[Theorem 4.6.10]{BH} one has a numerical condition for when a sop is a regular sequence. Indeed, let $f_1,\ldots,f_d\in\mm$ be a  sop of $R$ and let $\overline{R}=R/(f_1,\ldots,f_d)$. Then,  denoting  by $e(M)$ the multiplicity of an $R$-module $M$, one has  $e(\overline{R})\geq e(R)$, and   if $e(\overline{R})= e(R)$ then  $f_1,\ldots,f_d$ is a regular sequence (equivalently,  $R$ is Cohen--Macaulay).  Moreover, if $(f_1,\ldots,f_d)$ is a reduction ideal of $\mm$ and $f_1,\ldots,f_d$ is a regular sequence, then $e(\overline{R})= e(R)$, see Proposition~\ref{criterion}. There is also a graded version of this criterion, see Proposition~\ref{graded}. In the case that $R$ is a standard graded $K$-algebra and the sop $f_1,\ldots,f_d$ is homogeneous with $\deg f_i =a_i$, then this sop is a regular sequence if and only if $e(\overline{R})=a_1a_2\cdots a_d e(R)$. By a lack of good references we provided  the detailed proofs of these results.

In Proposition~\ref{refinement} we give in the graded case a measure for the difference $e(\overline{R})-e(R)$. As a consequence we obtain in Corollary~\ref{surprising} the result that if the sop $f_1,\ldots,f_d$ is a superficial sequence, and $R/(f_1,\ldots,f_r)$ is Cohen--Macaulay for some $r<d$, then $R$ is Cohen--Macaulay.

In Section~\ref{sec:special} we study a class of posets and their order complexes as well as  K\"onig graphs by means of sop's. We consider a poset  $P$  which  as a set is the disjoint union of two sets $C_1$ and $C_2$, where $C_1:x_1<x_2<\cdots<x_n$ and $C_2:y_1<y_2<\cdots<y_n$  are maximal chains in $P$. For such a poset the sequence $x_1-y_1, \ldots, x_n-y_n$ is a sop of the Stanley--Reisner ring $K[\Delta(P)]$, where $\Delta(P)$ denotes the order complex of $P$. The covering relations $x_i\lessdot y_j$  in $P$ we call the diagonals of $P$. The Cohen--Macaulay property of $K[\Delta(P)]$ can be expressed in terms of the diagonals of $P$.  Indeed, in Theorem~\ref{poset} it is shown $K[\Delta(P)]$ is Cohen--Macaulay  if and only if it is pure shellable, and that this is equivalent to the condition that the diagonals of $P$ satisfy the following conditions: (i) if $x_i\lessdot y_j$ or $y_i\lessdot x_j$, then $j=i+1$, and (ii)  $\{x_i,y_{i+1}\}\notin \Delta(P)$ implies that $\{x_{i+1},y_i\}\in \Delta(P)$.  In a similar fashion it can be characterized when  $I_{\Delta(P)}$ has a linear resolution, see Proposition~\ref{linear}.

Note that $I_{\Delta(P)}$  may be viewed as the edge ideal $I(G)$ of a suitable bipartite graph $G$. So the question arises for which graphs $G$  can we find a sop $f_1,\ldots, f_d$  of $K[V(G)]/I(G)$,  where each $f_i$ is just a difference of two variables, like we have it for $K[\Delta(P)]$. The advantage of such sop's is that after reduction they preserve the monomial structure and just identify vertices.
The surprising answer to the above  question is that a graph $G$ admits such a special sop if and only if $G$ is a  K\"onig graph. In fact, this is a corollary of a more general theorem. Let $I\subset S$ be a monomial ideal in the polynomial ring  $S=K[x_1,\ldots,x_n]$ over the field $K$ in $n$ variables. We denote by $\mgrade(I)$ the maximal length of a regular sequence of monomials in $I$, and call this number the {\em monomial grade} of $I$. One has  $\mgrade(I)\leq \grade(I)=\height(I)$. We call $I$ a {\em monomial ideal of K\"onig type} if $I\neq 0$ and $\mgrade(I)=\height(I)$. The naming is justified by the fact that if $I=I(G)$ for some graph $G$, then $\height(I)=\tau(G)$ and $\mgrade(I)=\nu(G)$, so that the edge ideal of a graph $G$ is a monomial ideal of K\"onig type  if and only if $G$ is a  K\"onig graph. Now our Theorem~\ref{algebraic} says that a monomial ideal $I\subset S=K[x_1,\ldots,x_n]$  is a monomial ideal of K\"onig type if and only if $S/I$ admits a sop $f_1,\ldots,f_d$,  where each $f_k$ is of the form $x_i-x_j$  for suitable $i$ and $j$.

Applied to graphs this result reads a follows: let  $G$ be a graph without isolated vertices, $S=K[V(G)]$ and for any edge $e=\{x,y\}\in E(G)$, let $f_e=x-y$ be an element in $S$. Then $G$ is a K\"{o}nig graph if and only if there exists a subset $\{e_1,\ldots,e_d\}$ of edges of $G$ such that $f_{e_1},\ldots,f_{e_d}$ is a sop for $R=S/I(G)$. This sop has the nice property that $\reg(R/(f_{e_1},\ldots,f_{e_d})R)\leq \reg(R)$, as shown in Theorem~\ref{reg}.

For a graph $G$, we denote by  $\mi(G)$  the number of maximal independent sets of $G$. It is an important problem in graph theory to give upper bounds for $\mi(G)$. For a K\"onig graph it was shown in  \cite[Corollary 3.4]{HTT} that $2^{\nu(G)}$ is an upper bound for $\mi(G)$ where  $\nu(G)$ denotes the maximum size of matchings of $G$, and  in \cite[Theorem 1]{A} it was proved that $\mi(G)\leq M(G)+1$, where $M(G)$ is the number of induced matchings in $G$. By using our special sop for unmixed K\"onig graphs  we give  a stronger bound for $\mi(G)$ and at the same time provide a combinatorial criterion for  the Cohen--Macaulay property for unmixed K\"onig graphs. A different combinatorial characterization of Cohen-Macaulay K\"{o}nig graphs is known from  \cite[Proposition 28]{CCR}. Our result (Theorem~\ref{im}) is a follows: Let $G$ be a K\"{o}nig graph and $\{e_1,\ldots,e_m\}$ be a maximal matching of $G$ with $\nu(G)=m$,  and let $k$ be the number of induced matchings of $G$ contained in $\{e_1,\ldots,e_m\}$. Then  $\mi(G)\leq k+1$  and equality holds if and only if $G$ is a Cohen-Macaulay graph.

In the last section we introduce a sop for  the  Stanley--Reisner ring of any simplicial complex $\Delta$. We call it  the universal sop  of $K[\Delta]$ because it is built in a uniform way for all simplicial complexes,  and its construction does not depend on the base field $K$. The price we have to pay for this, is that this is not a sop of linear forms, instead it is defined as follows:   $p_i(\Delta)=\sum_{F\in \Delta\atop |F|=i}\prod_{j\in F}x_j$ for $i=1,\ldots,\dim \Delta +1$,  see Theorem~\ref{standard}. By using  Proposition~\ref{graded} we obtain a Cohen--Macaulay criterion for $K[\Delta]$ in terms of this sop. This  turns out to be a useful computational tool to check Cohen-Macaulayness, as we demonstrate at the example of  a chessboard complex.

\section{Criteria of Cohen--Macaulayness in  terms of systems of parameters}
\label{sec:crit}

In this section we collect some results on  sop's which all are based on results of Serre (see \cite[Theorem 4.6.10]{BH}) and which in terms of multiplicities allow   to check whether a ring or a module is Cohen--Macaulay.  One of the first efficient applications of these criteria was given by the first author of this paper  in order to study the conormal module and the module of differentials   of a $K$-algebra,  see \cite{JH}.

\begin{Proposition}
\label{criterion}
Let $R$ be a Noetherian local ring (or a standard graded $K$-algebra) with (graded) maximal  ideal $\mm$, and let $I\subset R$ be an ideal generated by a (homogeneous) sop of $R$. Then
\begin{enumerate}
\item[(a)] $e(R/I)=\length (R/I)\geq e(R)$.
\item[(b)] If $e(R/I)= e(R)$, then $R$ is Cohen-Macaulay.
\item[(c)] If $I$ is a reduction ideal of $\mm$ and $R$ is Cohen-Macaulay, then $e(R/I)= e(R)$.
\end{enumerate}

If {\em (b)} holds, then the sop which generates  $I$ is a regular sequence. In particular,  $r(R)=r(R/I)$,  and so $R$ is Gorenstein if and only if $R/I$ is Gorenstein. (Here we denote by $r(M)$ the (Cohen--Macaulay) type of a Cohen--Macaulay module $M$).
\end{Proposition}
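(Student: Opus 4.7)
The plan is to derive everything from Serre's theorem \cite[Theorem 4.6.10]{BH}, which for an $\mm$-primary ideal $I = (f_1, \dots, f_d)$ generated by a sop of a $d$-dimensional Noetherian local (or standard graded) ring $R$ asserts
$$e(I;R) \leq \length(R/I),$$
with equality if and only if $f_1, \dots, f_d$ is an $R$-regular sequence. Alongside this I shall use two standard facts from multiplicity theory: (i) for any $\mm$-primary $I \subseteq \mm$ one has $e(I;R) \geq e(\mm;R) = e(R)$, since $I^n \subseteq \mm^n$ gives $\length(R/I^n) \geq \length(R/\mm^n)$ term by term; and (ii) if $I$ is a reduction of $\mm$, the relation $\mm^{n+1} = I\mm^n$ for $n \gg 0$ forces $e(I;R) = e(\mm;R)$.

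For (a), since $R/I$ is Artinian one has $e(R/I) = \length(R/I)$, and chaining with Serre's inequality and (i) yields
$$e(R/I) \;=\; \length(R/I) \;\geq\; e(I;R) \;\geq\; e(\mm;R) \;=\; e(R).$$
For (b), the hypothesis $e(R/I) = e(R)$ collapses every inequality in this chain to an equality; the equality case of Serre's theorem then forces $f_1, \dots, f_d$ to be a regular sequence, so $\depth R = d = \dim R$ and $R$ is Cohen--Macaulay. For (c), Cohen--Macaulayness of $R$ automatically upgrades any sop to a regular sequence, so Serre's equality gives $\length(R/I) = e(I;R)$; combined with (ii) this gives $e(R/I) = e(R)$.

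For the tail of the statement, the regular-sequence claim was obtained en route in (b). For the type equality, since $\mathbf{f} = f_1, \dots, f_d$ is an $R$-regular sequence of length $d = \dim R$, a short induction on $d$ using the long exact $\Ext$ sequence of $0 \to R \xrightarrow{f_i} R \to R/f_iR \to 0$ (together with the vanishing $f_i \cdot k = 0$) yields
$$\Ext^d_R(k, R) \;\cong\; \Hom_R(k, R/I) \;=\; \Soc(R/I),$$
so $r(R) = \dim_k \Soc(R/I) = r(R/I)$; in particular $R$ is Gorenstein iff $R/I$ is.

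The heavy lifting is localised entirely in Serre's theorem, which is imported as a black box; the remainder of the argument is bookkeeping. The main point requiring care is checking that the graded case runs in parallel to the local one (via graded Hilbert--Samuel polynomials and the graded version of Serre's theorem), but this is standard and introduces no genuine obstacle.
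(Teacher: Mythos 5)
Your proof is correct and follows essentially the same route as the paper: both arguments chain $e(R/I)=\length(R/I)\geq e(I;R)\geq e(\mm;R)=e(R)$, invoke the equality case of Serre's inequality (\cite[Corollary 4.6.11]{BH}) for (b), and use the reduction-ideal identity $e(I;R)=e(\mm;R)$ for (c). Your explicit $\Ext$/socle argument for $r(R)=r(R/I)$ supplies a detail the paper merely asserts, but this is standard and does not change the approach.
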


\begin{proof}
For the proof we recall a few facts: Let $M\neq 0$ be a finitely generated $R$-module of dimension $d$ and $I\subseteq \mm$ be an ideal with $\dim M/IM=0$. Then
\[
e(I,M)=\lim_{k\to \infty}(d!/k^d)\length(M/I^{k+1}M)
\]
is called the multiplicity of $M$ with respect to $I$. The multiplicity of $M$, denoted $e(M)$, is the multiplicity of $M$ with respect to $\mm$.

Obviously, if $I\subseteq J\subseteq \mm$, then
$
e(I,M)\geq e(J,M).
$ In particular,
\begin{eqnarray}
\label{IJ}
e(I,M)\geq e(M).
\end{eqnarray}
On the other hand, if $I$ is a reduction ideal of $\mm$ with respect to $M$, that is, if $I\mm^k M=\mm^{k+1}M$ for some $k$, then equality holds in (\ref{IJ}), see \cite[Lemma 4.5.5]{BH}.

We also need the following result (\cite[Corollary 4.6.11]{BH} or \cite{JH} where it first appeared:  let $I$ be generated by a sop and assume that $M$ has positive rank. Then
\begin{enumerate}
\item[(i)] $\length(M/IM)\geq e(I,M)\rank M$.
\item[(ii)] $M$ is Cohen-Macaulay  if and only if  $\length(M/IM)= e(I,M)\rank M$.
\end{enumerate}

Now we apply these results to the case that $M=R$. We first notice that $e(R/I)=\length(R/I)$, since $\dim R/I=0$. Next (\ref{IJ}) and (i) imply
\begin{eqnarray}
\label{inbetween}
\length(R/I)\geq e(I,R)\geq e(R).
\end{eqnarray}
This proves (a). If $\length(R/I)= e(R)$, then (\ref{inbetween}) implies $\length(R/I)= e(I,R)$, and then (ii) yields (b). Finally, if $I$ is a reduction ideal of $\mm$, then $e(I,R)=e(R)$, and (\ref{inbetween}) together with (ii) implies (c).

If $R$ is Cohen--Macaulay, then each sop is a regular sequence.
\end{proof}

Now we turn to a  graded version of Proposition~\ref{criterion}.

\begin{Proposition}
\label{graded}
Let $R$ be a standard graded $K$-algebra with graded maximal ideal $\mm$, and  let $I$ be generated by  the homogeneous sop $f_1,\ldots,f_d$ with $\deg f_i=a_i$ for $i=1,\ldots,d$. Then
\begin{enumerate}
\item[(a)] $e(R/I)=\length (R/I)\geq a_1a_2\cdots a_d e(R).$

\item[(b)] $R$ is Cohen--Macaulay if and only if $e(R/I)= a_1a_2\cdots a_d e(R).$
\end{enumerate}
If the equivalent conditions given in {\em (b)}  hold, then $f_1,\ldots,f_d$  is a regular sequence. In particular $r(R)=r(R/I)$,  and $R$ is Gorenstein if and only if $R/I$ is Gorenstein.
\end{Proposition}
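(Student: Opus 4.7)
The strategy is to reduce the statement to Proposition~\ref{criterion} by computing the Samuel multiplicity $e(I,R)$ exactly in the graded setting. From the proof of Proposition~\ref{criterion}, fact (i) applied to $M = R$ (which has rank $1$) yields $\length(R/I) \geq e(I, R)$, with equality iff $R$ is Cohen--Macaulay. Thus both (a) and (b) follow once we establish the multiplicity identity $e(I, R) = a_1 a_2 \cdots a_d \cdot e(R)$.

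To prove this identity, first I would set up a Noether normalization. Since $R/I$ is a finite-dimensional $K$-algebra, the graded Nakayama lemma implies that $R$ is a finitely generated graded module over $A := K[f_1, \ldots, f_d] \subseteq R$. A dimension count ($\dim R = \dim A = d$) forces $f_1, \ldots, f_d$ to be algebraically independent over $K$, so $A$ is a weighted polynomial ring with $\deg f_i = a_i$. Since $I = \mm_A R$ for $\mm_A = (f_1, \ldots, f_d)A$, the filtrations $\{I^n R\}$ and $\{\mm_A^n R\}$ coincide, giving $e(I,R) = e(\mm_A, R)$. The associativity formula for Samuel multiplicities then yields $e(\mm_A, R) = \rank_A(R) \cdot e(\mm_A, A) = \rank_A R$: the module $R$ is $A$-torsion-free (it contains $A$), so the unique contributing minimal prime is $(0)\in \Spec A$, and $e(\mm_A, A) = \length(A/\mm_A) = 1$ since $\mm_A$ is generated by a regular sequence in the Cohen--Macaulay ring $A$.

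Next, I would identify the rank by comparing two expressions for the Hilbert series: $H_R(t) = Q(t)/(1-t)^d$ with $Q(1) = e(R)$ (using the standard grading on $R$), versus $H_R(t) = P(t)/\prod_j (1-t^{a_j})$ with $P(1) = \rank_A R$ (using $R$ as a graded $A$-module). Equating these and using $(1 - t^{a_j}) = (1-t)(1 + t + \cdots + t^{a_j - 1})$ gives
\[
P(t) = Q(t) \prod_{j=1}^d (1 + t + \cdots + t^{a_j - 1}),
\]
whence $\rank_A R = P(1) = a_1 \cdots a_d \cdot e(R)$. Substituting $e(I,R) = a_1 \cdots a_d \, e(R)$ into the initial inequality proves (a) and (b). The regular-sequence conclusion is then immediate since any sop in a Cohen--Macaulay ring is regular, and the equality $r(R) = r(R/I)$ together with the Gorenstein equivalence are standard consequences of reducing a Cohen--Macaulay ring by a maximal regular sequence (the socle has the same dimension). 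The main technical obstacle is invoking the associativity formula in the weighted graded setting and verifying via graded Nakayama that $R$ is module-finite over the weighted polynomial ring $A$; once these are in place, the Hilbert series comparison is mechanical.
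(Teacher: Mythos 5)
Your argument is correct, but it reaches the key multiplicity identity by a genuinely different route than the paper. Both proofs reduce to facts (i) and (ii) from the proof of Proposition~\ref{criterion}, i.e.\ to comparing $\length(R/I)$ with $e(I,R)$; the difference is in how $e(I,R)$ is related to $a_1\cdots a_d\,e(R)$. The paper establishes only the inequality $e(f_1,\ldots,f_d,R)\geq a_1\cdots a_d\,e(R)$: it replaces each $f_i$ by $f_i^{b_i}$ with $b_i=a/a_i$ and $a=a_1\cdots a_d$, so that all the powers have common degree $a$ and generate an ideal contained in $\mm^a$, and then invokes $e(f_1^{b_1},\ldots,f_d^{b_d},R)=b_1\cdots b_d\,e(f_1,\ldots,f_d,R)$ together with $e(\mm^a,R)=a^de(R)$. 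Because it only has an inequality, the converse direction of (b) needs a separate computation of $\length(R/I)$ from the Hilbert series of $R/I$ once the sop is known to be a regular sequence. You instead prove the exact identity $e(I,R)=a_1\cdots a_d\,e(R)$ via Noether normalization over the weighted polynomial ring $A=K[f_1,\ldots,f_d]$, the associativity formula $e(\mm_A,R)=\rank_A R$, and the two presentations of the Hilbert series of $R$ (over $S$ with the standard grading and over $A$), from which (a) and (b) drop out simultaneously. Your route is cleaner in that it proves a stronger statement and avoids the separate converse computation; its cost is importing the associativity formula and the finite free resolution theory over a non-standard graded polynomial ring (needed to see that the numerator $P(t)$ satisfies $P(1)=\rank_A R$), plus the small observation that in the graded setting $\length_R$ and $\length_A$ of $R/I^{k+1}$ both equal $\dim_K$, so that $e(I,R)=e(\mm_A,R)$. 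All of these steps are sound, so the proposal is a valid alternative proof.
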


\begin{proof}
 Let $a=a_1a_2\cdots a_m$ and set $b_i=a/a_i$ for $i=1,\ldots,d$. Then
\[
e(f_1^{b_1},\ldots,f_d^{b_d},R)=b_1b_2\cdots b_de(f_1,\ldots,f_d,R),
\]
see \cite[Proposition 11.2.9]{SH}. Moreover, $\deg f_i^{b_i}= a$ for $i=1,\ldots,d$. Therefore,
\[
(f_1^{b_1},\ldots,f_d^{b_d}) \subseteq \mm^a.
\]
Thus, since $e(\mm^a,R)=a^de(R)$  (\cite[Proposition 11.2.9]{SH}), we obtain that
\begin{eqnarray}
\label{prod}
b_1b_2\cdots b_de(f_1,\ldots,f_d,R)=e(f_1^{b_1},\ldots,f_d^{b_d},R)\geq e(\mm^a,R)=a^de(R),
\end{eqnarray}
which together with (i) in the proof of Proposition~\ref{criterion} imply the inequality  in (a).

(b) Assuming that $\length (R/I)= a_1a_2\cdots a_d e(R)$, we obtain together with (\ref{prod}) that
\[
 a_1a_2\cdots a_d e(R)=\length (R/I)\geq e(f_1,\ldots,f_d,R)\geq a_1a_2\cdots a_d e(R),
 \]
and hence $\length (R/I)= e(f_1,\ldots,f_d,R)$. Thus (ii) in the proof of Proposition~\ref{criterion}  implies that $R$ is Cohen--Macaulay. Conversely, suppose that $R$ is Cohen--Macaulay. Then $f_1,\ldots,f_d$ is a regular sequence. Let $\Hilb_R(t)=Q_R(t)/(1-t)^d$ be the Hilbert series of $R$. Then
\[
Q_{R/I}(t)=\Hilb_{R/I}(t)=\Hilb_R(t)\prod_{i=1}^d(1-t^{a_i})=Q_R(t)(\prod_{i=1}^d(\sum_{j=0}^{a_{i-1}}t^j).
\]
It follows that $$\length(R/I)=Q_{R/I}(1)=Q_R(1)a_1a_2\cdots a_d=e(R)a_1a_2\cdots a_d.$$
\end{proof}

The next result is a certain refinement of the statements given in Proposition~\ref{graded}.
We first recall the following fact (see for example (\cite[Proposition A.4.]{BH}): Let $(R,\mm)$ be a Noetherian local ring and $f_1,\ldots,f_m$ a sequence of elements in $\mm$. Then
\begin{enumerate}
\item[($\alpha$)] $\dim R\geq \dim R/(f_1,\ldots,f_m)\geq \dim R-m$,  and
\item[($\beta$)] $\dim R/(f_1,\ldots,f_m)= \dim R-m$ if and only $f_1,\ldots, f_m$ can be completed to a sop of $R$.
\end{enumerate}
A similar statement holds for graded $K$-algebras.

\begin{Proposition}
\label{refinement}
With the assumptions and notation of Proposition~\ref{graded}  let
\[
U_i=\Ker(R/(f_1,\ldots,f_{i-1})\xrightarrow{f_i}
R/(f_1,\ldots,f_{i-1})).\]
Then
\begin{enumerate}
\item[(a)] $\dim U_i\leq d-i$ for all $i$.
\item[(b)] Set $\dim U_i=-1$ if $U_i=0$. Then
\[
e(R/I)=a_1\cdots a_de(R)+\sum_{i=1 \atop  \dim U_i=d-i }^da_{i+1}\cdots a_de(U_i).
\]
\end{enumerate}
In particular,  if $\deg f_i=1$ for all $i$, then
\[
e(R/I)=e(R)+\sum_{i=1 \atop  \dim U_i=d-i }^de(U_i).
\]
\end{Proposition}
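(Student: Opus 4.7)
First I would set $R_i = R/(f_1,\ldots,f_i)$, so that $R_0 = R$ and $R_d = R/I$. Because $f_1,\ldots,f_d$ is a sop of $R$, each initial segment extends to a sop, and property $(\beta)$ recalled above gives $\dim R_i = d-i$ for every $i$. For part (a) the key observation is that $f_i$ annihilates $U_i$ by definition, while $U_i$ is a submodule of $R_{i-1}$. Consequently $U_i$ carries the structure of a module over $R_{i-1}/(f_i)=R_i$, and hence $\dim U_i \leq \dim R_i = d-i$.

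For part (b) the plan is to compute $e(R/I)=\length(R/I)=\Hilb_{R/I}(1)$ by exploiting the four-term exact sequence of graded modules
\[
0 \to U_i(-a_i) \to R_{i-1}(-a_i) \xrightarrow{f_i} R_{i-1} \to R_i \to 0,
\]
in which the shift $(-a_i)$ makes $f_i$ a degree-zero map. The alternating sum of Hilbert series yields the recursion
\[
\Hilb_{R_i}(t) = (1-t^{a_i})\,\Hilb_{R_{i-1}}(t) + t^{a_i}\,\Hilb_{U_i}(t),
\]
and unrolling this from $i=d$ down to $i=1$ produces
\[
\Hilb_{R/I}(t) = \Hilb_R(t)\prod_{i=1}^d (1-t^{a_i}) \;+\; \sum_{i=1}^d t^{a_i}\,\Hilb_{U_i}(t)\prod_{j=i+1}^d (1-t^{a_j}).
\]

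Next I would substitute the canonical form $\Hilb_M(t) = Q_M(t)/(1-t)^{\dim M}$ with $Q_M(1) = e(M)$, and factor $1-t^{a_i} = (1-t)(1+t+\cdots+t^{a_i-1})$. The first summand then collapses at $t=1$ to $a_1\cdots a_d\,e(R)$. In the $i$-th term of the sum, the product $\prod_{j>i}(1-t^{a_j})$ supplies a factor $(1-t)^{d-i}$, while $\Hilb_{U_i}(t)$ contributes a pole of order $\dim U_i$; by part (a) this pole order is at most $d-i$, so the term vanishes at $t=1$ unless $\dim U_i = d-i$, in which case it evaluates to $a_{i+1}\cdots a_d\,e(U_i)$. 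Summing these contributions gives the stated formula, and the case $a_i = 1$ for all $i$ is then an immediate substitution.

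The main obstacle I foresee is the careful bookkeeping of $(1-t)$-powers at the final evaluation: one has to verify that $(1-t)^{d-i}$ cancels the pole $(1-t)^{-\dim U_i}$ precisely in the extremal case $\dim U_i = d-i$, and otherwise forces the term to vanish. This also explains the convention $\dim U_i = -1$ when $U_i = 0$, which simply excludes vanishing terms automatically and keeps the summation range uniform.
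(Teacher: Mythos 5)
Your proof is correct and takes essentially the same route as the paper: the identical module-structure argument for (a), and for (b) the same four-term exact sequence and the same evaluation of Hilbert series at $t=1$ with the pole-order comparison $\dim U_i$ versus $d-i$. The only differences are cosmetic — you unroll the recursion to a closed form where the paper proceeds by induction on $i$, and you carry the twist $U_i(-a_i)$ explicitly, producing the extra factor $t^{a_i}$ that is harmless at $t=1$.
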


\begin{proof}
(a) $U_i$ is a submodule of $R/(f_1,\ldots,f_{i-1})$ with $f_iU_i=0$. Thus $U_i$ is a $R/(f_1,\ldots,f_{i})$-module  and hence $\dim U_i\leq \dim R/(f_1,\ldots,f_{i}) =d-i$, where the equation follows from ($\beta$).

(b) From the following exact sequence
\begin{eqnarray*}
 0\to U_i\to (R/(f_1,\ldots,f_{i-1}))(-a_i)\xrightarrow{f_i}
R/(f_1,\ldots,f_{i-1})\to R/(f_1,\ldots,f_{i})\to 0
\end{eqnarray*}
we deduce the equality
\begin{eqnarray}
\label{hilbert}
\Hilb_{R/(f_1,\ldots,f_{i})}(t)=(1-t^{a_i})\Hilb_{R/(f_1,\ldots,f_{i-1})}(t)+\Hilb_{U_i}(t).
\end{eqnarray}
We have $\Hilb_{R/(f_1,\ldots,f_{i})}(t)=Q_i(t)/(1-t^{d-i})$ with $Q_i(1)=e(R/(f_1,\ldots,f_{i}))$, similarly,
$\Hilb_{R/(f_1,\ldots,f_{i-1})}(t)=Q_{i-1}(t)/(1-t^{d-(i-1)})$ with $Q_{i-1}(1)=e(R/(f_1,\ldots,f_{i-1}))$ and $\Hilb_{U_i}(t)  =P_i(t)/(1-t^{\delta_i})$ with $P_i(1)=e(U_i)$ and $\delta_i\leq d-i$.

Thus (\ref{hilbert}) implies that
\begin{eqnarray*}
Q_i(t)/(1-t^{d-i})=(1-t^{a_i})(Q_{i-1}(t)/(1-t^{d-(i-1)}))+
P_i(t)/(1-t^{\delta_i}),
\end{eqnarray*}
from which we deduce that
\[
Q_i(t)=Q_{i-1}(t)(\sum_{j=0}^{a_i-1}t^j)+(1-t)^{d-i-\delta_i}P_i(t).
\]
Substituting $t$ by $1$,  we get
\[
e(R/(f_1,\ldots,f_i))=
\begin{cases}
a_ie(R/(f_1,\ldots,f_{i-1})), \text{\;\; \; \;\; \;\;\;\     if $\dim U_i<d-i$},\\
a_ie(R/(f_1,\ldots,f_{i-1}))+e(U_i), \text{ if $\dim U_i=d-i$}.
\end{cases}
\]
These formulas together with  induction on $i$ complete the proof.
\end{proof}

Proposition~\ref{refinement} together with Proposition~\ref{graded} has the following a surprising consequence

\begin{Corollary}
\label{surprising}
With the assumptions and notation of Proposition \ref{refinement} let $r<d$ be an integer with the property that $\dim U_i<d-i$ for $i=1,\ldots,r$  and that $R/(f_1,\ldots,f_r)$ is Cohen--Macaulay. Then $R$ is Cohen-Macaulay and $U_i=0$ for  all $i$. In particular, if $f_1,\dots,f_d$ is a superficial sequence and $R/(f_1,\ldots,f_i)$ is Cohen-Macaulay for some $i<d$, then $R$ is Cohen--Macaulay.
\end{Corollary}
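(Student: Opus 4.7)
The plan is a short chain of applications of the preceding results, computing the length $\length(R/I)$ from both ends and comparing. By the recursion established inside the proof of Proposition~\ref{refinement}, the dimension hypothesis $\dim U_i<d-i$ for $i=1,\ldots,r$ yields
\[
e\bigl(R/(f_1,\ldots,f_i)\bigr)=a_i\,e\bigl(R/(f_1,\ldots,f_{i-1})\bigr)\quad\text{for } i=1,\ldots,r,
\]
so iterating from $i=1$ up to $i=r$ produces $e(R')=a_1a_2\cdots a_r\,e(R)$, where $R':=R/(f_1,\ldots,f_r)$.

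Since $R'$ is Cohen--Macaulay by assumption and $f_{r+1},\ldots,f_d$ is a homogeneous sop of $R'$, Proposition~\ref{graded}(b) applied to $R'$ gives $\length(R/I)=a_{r+1}\cdots a_d\,e(R')=a_1a_2\cdots a_d\,e(R)$. A second application of Proposition~\ref{graded}(b), this time to $R$ itself with the full sop $f_1,\ldots,f_d$, then shows that $R$ is Cohen--Macaulay. Being Cohen--Macaulay means that $f_1,\ldots,f_d$ is a regular sequence, which is precisely the assertion $U_i=0$ for every $i$.

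For the ``in particular'' clause I would invoke the defining property of a superficial element: if $f_i$ is superficial on $R/(f_1,\ldots,f_{i-1})$, then $U_i$ has finite length, hence $\dim U_i\le 0$. Taking $r$ to be the index $i<d$ for which $R/(f_1,\ldots,f_r)$ is Cohen--Macaulay, we have $d-r\ge 1$, so $\dim U_j\le 0<d-j$ for all $j=1,\ldots,r$, and the first part of the corollary applies. The only subtle point throughout is ensuring that the chain of multiplicity identities is legitimate, namely that each partial quotient $R/(f_1,\ldots,f_i)$ has dimension exactly $d-i$; this is automatic from property $(\beta)$ recalled before Proposition~\ref{refinement}, since $f_1,\ldots,f_d$ is a sop.
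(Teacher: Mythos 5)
Your proposal is correct and follows essentially the same route as the paper: use the recursion from Proposition~\ref{refinement} to get $e(R/(f_1,\ldots,f_r))=a_1\cdots a_r e(R)$, use Cohen--Macaulayness of $R/(f_1,\ldots,f_r)$ to propagate the multiplicity identity through $f_{r+1},\ldots,f_d$, and conclude via Proposition~\ref{graded}. The handling of the superficial case (finite length of $U_i$, hence $\dim U_i\le 0<d-i$) is also the intended argument.
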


\begin{proof}
Proposition~\ref{refinement} implies that
$e(R/(f_1,\ldots,f_r))=a_1\cdots a_re(R)$. Since by assumption $R/(f_1,\ldots,f_r)$ is Cohen--Macaulay,  it follows that $f_{r+1},\cdots, f_d$ is a regular $R/(f_1,\ldots,f_r)$-sequence. Hence,  $e(R/(f_1,\ldots,f_d))=a_{r+1}\cdots a_de(R/(f_1,\ldots,f_r))$, and we deduce that  $e(R/(f_1,\ldots,f_d))=a_1\cdots a_d e(R)$. Thus the desired result follows from Proposition~\ref{graded}.
\end{proof}

\medskip
We close this section with a remark  and a question. Let $S=K[x_1,\ldots,x_n]$ be the polynomial ring over the field $K$, $R=S/I$ with $I\subset (x_1,\ldots,x_n)^2$ a graded ideal. Let $f_1,\ldots,f_d$ be linear forms of $S$ which form a sop for $R$. Let $\overline{S}=S/(f_1,\ldots,f_d)$. Then $\overline{S}$ is isomorphic to a polynomial ring in $n-d$ variables, and  $R/(f_1,\ldots,f_d)=\bar{S}/\overline{I}$, where $\overline{I}=I\overline{S}$.

\begin{Remark}
\label{question}
With the notation introduced we have $\projdim_{\overline{S}} \overline{I} \leq \projdim_S I $ and $\mu(\overline{I})\leq \mu(I)$. Equality holds in both inequalities,  if $S/I$ is Cohen--Macaulay.
\end{Remark}

\begin{proof}
By the Auslander-Buchsbaum formula,
\[
\projdim_S S/I=n-\depth S/I\geq n-\dim S/I=n-d =\projdim_{\overline{S}}\overline{S}/\overline{I}.
\]
The last equation holds since $\dim \overline{S}/\overline{I}=0$. This implies the first assertion. It is obvious that $\mu(I)\geq \mu(I\overline{S})$. Finally, if $S/I$ is Cohen--Macaulay, then $I$ is generated by a regular sequence  and the desired equalities hold.
\end{proof}

Remark~\ref{question} implies in particular that if  $\mu(\overline{I})< \mu(I)$, then $R$ cannot be Cohen--Macaulay. On the other hand,  $\mu(\overline{I})=\mu(I)$, does not necessarily imply that  $R$  is Cohen--Macaulay. For example for the cycle graph $C_6:x_1,\ldots,x_6$,  the sequence $x_1-x_2,x_3-x_4,x_5-x_6$ is a  sop for $R=K[x_1,\ldots,x_6]/I(C_6)$ and $\overline{I(C_6)}=(x_1^2,x_3^2,x_5^2,x_1x_3,x_3x_5,x_1x_5)$. Then $\mu(\overline{I(C_6)})=\mu(I(C_6))=6$, while $R$ is not Cohen-Macaulay.

In view of these inequalities  one is tempted to ask whether under the assumptions of Remark~\ref{question} we have $\reg(\overline{I})\leq \reg(I)$. In the next section we show that this inequality for the regularity indeed holds for  the edge ideal of K\"onig graphs and suitable  natural sop's.

\section{Special systems of parameters applied to order complexes and K\"onig graphs}
\label{sec:special}

In this section we define monomial ideals of  K\"onig type which include edge ideals of  K\"onig graphs and give a characterization for these ideals in terms some sop's for their quotient rings. Also we apply Proposition \ref{criterion} to the Stanley-Reisner ring of two families of simplicial complexes, namely the order complex of a finite poset and the independence complex of a K\"{o}nig graph and give combinatorial descriptions for the Cohen-Macaulay property of these rings.

For a poset $P$, a nonempty subposet $C$ of $P$ which is totally
ordered is called a \emph{chain} in $P$. The \emph{order complex} of $P$ denoted by $\Delta(P)$ is the simplicial complex whose faces are the chains in $P$.
The \emph{length} of a chain $C$ in $P$ is defined to be $|C|-1$. The \emph{height} of an element $x$ in $P$ is defined to be the maximal
length of a chain descending from $x$. For elements $x$ and $y$ in a poset $P$, it is said that $y$ \emph{covers} $x$, denoted $x\lessdot y$, if $x<y$ and there exists
no $z\in P$ such that $x<z<y$.
Also for a monomial ideal $I$, the cardinality of any minimal generating set of monomials of $I$ is denoted by $\mu(I)$.

\begin{Theorem}\label{poset}
Let $P$ be a poset which  as a set is the disjoint union of two sets $C_1$ and $C_2$, where $C_1:x_1<x_2<\cdots<x_n$ and $C_2:y_1<y_2<\cdots<y_n$  are maximal chains in $P$ and let $\Delta=\Delta(P)$.
Then the following conditions are equivalent:
\begin{enumerate}
\item[(a)]  $\Delta$ is Cohen-Macaulay.
\item[(b)]  $\Delta$ is pure shellable.
\item[(c)] $\Delta$ satisfies the following conditions:
\begin{enumerate}
             \item [(1)] If $x_i\lessdot y_j$ or $y_i\lessdot x_j$, then $j=i+1$, and
             \item [(2)] $\{x_i,y_{i+1}\}\notin \Delta$ implies that $\{x_{i+1},y_i\}\in \Delta$.
           \end{enumerate}
\end{enumerate}
\end{Theorem}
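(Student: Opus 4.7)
The plan is to prove $(b)\Rightarrow(a)\Rightarrow(c)\Rightarrow(b)$; the first implication is classical. The common starting observation is that under condition (c)(1), every covering relation of $P$ raises the \emph{level} (the common index shared by $x_i$ and $y_i$) by exactly one, so every saturated chain in $P$ from a minimal to a maximal element uses exactly one element from each of the $n$ levels. Consequently the facets of $\Delta(P)$ are in bijection with sequences $\epsilon=(\epsilon_1,\ldots,\epsilon_n)\in\{x,y\}^n$ whose transitions $\epsilon_i\to\epsilon_{i+1}$ are realized by a cover of $P$, and $\Delta(P)$ is automatically pure of dimension $n-1$ as soon as (c)(1) holds.

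For $(a)\Rightarrow(c)$: Cohen--Macaulayness implies purity of $\Delta(P)$. If $x_i\lessdot y_j$ with $j\ne i+1$, the chain $x_1<\cdots<x_i<y_j<\cdots<y_n$ is maximal in $P$ of size $i+(n-j+1)$, so purity yields $j=i+1$, which is (c)(1). For (c)(2) I would apply Proposition~\ref{graded} to the linear homogeneous sop $f_k=x_k-y_k$: Cohen--Macaulayness of $R=K[\Delta]$ is equivalent to $\length(R/I)=e(R)$, where $I=(f_1,\ldots,f_n)R$. If both $\{x_i,y_{i+1}\}$ and $\{x_{i+1},y_i\}$ fail to lie in $\Delta$, then writing $z_k$ for the common image of $x_k$ and $y_k$ in $R/I$, the monomial $z_iz_{i+1}$ vanishes in $R/I$ because $x_iy_{i+1}=0$ already in $R$; a direct enumeration of the surviving squarefree monomials of $R/I$ (using that $z_k^2=0$ from $\{x_k,y_k\}\notin\Delta$) shows $\length(R/I)$ strictly exceeds the number of facets of $\Delta$, contradicting Cohen--Macaulayness. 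Alternatively one can exhibit a face $v$ (such as $v=x_n$) whose link $\lk_\Delta(v)$ decomposes at top dimension into two components, violating Reisner's criterion.

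For $(c)\Rightarrow(b)$: Order the facets of $\Delta(P)$ lexicographically in the encoding above with the convention $x<y$, so that $C_1$ is first and $C_2$ last. Call a position $i$ of a facet $F$ with $\epsilon_i^F=y$ \emph{flippable} if replacing $\epsilon_i$ by $x$ yields another valid sequence $F''$; then $F''<F$ in the order and $|F\cap F''|=n-1$. The key shelling claim is that for every pair of facets $F'<F$, the set $S$ of positions on which they differ contains a flippable position of $F$. Granting this, $F\cap F'\subseteq F\cap F''$ and Bj\"orner's shelling criterion is satisfied. The claim is proved by a case analysis: the mere fact that $F'$ itself is a valid facet forces the diagonals at the columns adjacent to $S$ to be of a type (A, B or C) that makes a suitable position of $S$ flippable in $F$, with condition (c)(2) supplying the diagonal in the corner cases where neither neighbour of a position lies in $S$.

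The main technical obstacle is precisely this case analysis: one has to track the interplay of the column types (A/B/C), the lex positions of the differing indices, and the validity constraints inherited from $F'$, and confirm that at least one position of $S$ admits a valid flip in $F$. A cleaner alternative would be to produce an EL-labeling of $\hat P=P\cup\{\hat 0,\hat 1\}$ directly from the diagonal structure and invoke Bj\"orner--Wachs, but the essential combinatorics of the diagonals reappears in verifying the EL property.
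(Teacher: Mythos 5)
Your architecture coincides with the paper's --- (b)$\Rightarrow$(a) is classical, (a)$\Rightarrow$(c)(1) is the same purity/chain-length argument, the sop $x_k-y_k$ drives (c)(2), and (c)$\Rightarrow$(b) is a lexicographic shelling on the $\{x,y\}^n$ encoding (your lex order is exactly the paper's $\prec$, and your purity claim under (c)(1) alone is correct) --- but both substantive implications are left with genuine gaps. For (a)$\Rightarrow$(c)(2), neither of your two suggested routes is carried out, and neither is routine. The ``direct enumeration'' would require an explicit injection from the facets of $\Delta$ into the surviving squarefree monomials of $R/I$ together with a monomial outside its image; the obvious candidate (send a facet to the product of the $z_k$ at its $y$-positions) does not even land in the surviving monomials, so strictness of $\length(R/I)>e(R)$ is not a triviality. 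The Reisner alternative with $v=x_n$ is wrong as stated: the disconnected link sits over the level $i$ where (2) fails, inside the link of a suitable codimension-two face, and one must still produce such a face admitting both an $x$- and a $y$-completion. The paper's actual argument is one line once you have your own key observation that $x_iy_{i+1}$ and $x_{i+1}y_i$ both specialize to $z_iz_{i+1}$: two distinct minimal generators of $I_\Delta$ collapse to one, so $\mu(\overline{I})<\mu(I_\Delta)$, contradicting Remark~\ref{question}, which forces $\mu(\overline{I})=\mu(I_\Delta)$ when $S/I_\Delta$ is Cohen--Macaulay. In short, you count lengths where you should count generators.

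For (c)$\Rightarrow$(b) you have reduced the shelling to the correct key claim --- that the set $S$ of positions on which $F'\prec F$ differ contains a $y$-position of $F$ whose flip to $x$ is again a facet --- and the claim is true, but its verification is the entire content of the implication and you explicitly decline to give it. Your sketch also misdescribes the mechanism: the flippable position is in general not the first position $t$ of $S$ (flipping $t$ fails exactly when $F$ carries $y_{t+1}$ at the next level and $x_t\not\lessdot y_{t+1}$); condition (c)(2) then supplies $y_t\lessdot x_{t+1}$, and one must propagate upward to the largest $r$ with $y_t,\dots,y_r\in F\setminus F'$ and $x_{s+1}\gtrdot y_s$ for $t\le s\le r$, flipping at level $r$ or $r+1$ according to whether $y_{r+1}$ lies in $F\setminus F'$. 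This bookkeeping is precisely the paper's case analysis, and your heuristic about ``the diagonals at the columns adjacent to $S$'' does not capture it.
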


\begin{proof}
(b)\implies (a): By \cite[Theorem 8.2.6]{HH} the assertion holds.

(a) \implies (c):  Suppose that $\Delta$ is Cohen-Macaulay. Then  $\Delta$ is pure.
Suppose $x_i\lessdot y_j$ for some $i$ and $j$. If $j\leq i$, then the chain $x_1<\cdots<x_i<y_j<y_{j+1}<\cdots<y_n$ is a chain of cardinality at least $n+1$, which is included in some maximal chain of $P$. But by purity of $\Delta$ any maximal chain should have cardinality $|C_1|=n$, which gives a contradiction. Thus $i<j$.
Similarly if $y_i\lessdot x_j$ for some $i$ and $j$, then $i<j$.
Now assume that $x_i\lessdot y_j$ and by contradiction let $j\neq i+1$. Since $j>i$, one should have $j>i+1$. Then $x_1<\cdots<x_i<y_j<y_{j+1}<\cdots<y_n$ is a maximal chain of cardinality at most $n-1$ in $P$, which is again a contradiction to purity of $\Delta$. So $j=i+1$. The argument for the case  $y_i\lessdot x_j$ is similar.

To prove $(2)$, first we show that the sequence $x_1-y_1,x_2-y_2,\ldots,x_n-y_n$ is a sop for the ring $R=S/I_{\Delta}$, where $S=K[x_1,\ldots,x_n,y_1,\ldots,y_n]$.
Indeed by $(1)$, $x_iy_i\in I_{\Delta}$ for any $1\leq i\leq n$. So we have $x_i^2,y_i^2\in (I_{\Delta},x_1-y_1,x_2-y_2,\ldots,x_n-y_n)$ for all $i$ and then $\dim R/(x_1-y_1,x_2-y_2,\ldots,x_n-y_n)R=0$. Also $\dim R=\dim \Delta+1=n$. Now,  by contradiction suppose that for some $i$, $\{x_i,y_{i+1}\}\notin \Delta$ and $\{x_{i+1},y_i\}\notin \Delta$. This means that $x_iy_{i+1}$ and $x_{i+1}y_i$ belong to the set of minimal generators  $\mathcal{G}(I_{\Delta})$ of $I_{\Delta}$. One has $R/(x_1-y_1,x_2-y_2,\ldots,x_n-y_n)R\cong K[x_1,\ldots,x_n]/I'$, where $I'=(x_ix_j:\ \textrm{$x_i$ and $y_j$ are non-comparable in $P$})$.
Since $R$ is Cohen-Macaulay, by Remark \ref{question}, one should have $\mu(I_{\Delta})=\mu(I')$. But
since $\mathcal{G}(I_{\Delta})=\{x_iy_j:\ \textrm{$x_i$ and $y_j$ are non-comparable in $P$}\}$, and $x_iy_{i+1},x_{i+1}y_i\in \mathcal{G}(I_{\Delta})$ correspond to just one element in $\mathcal{G}(I')$ that is $x_ix_{i+1}$, we have  $\mu(I')<\mu(I_{\Delta})$, a contradiction. Thus  $\{x_i,y_{i+1}\}\in \Delta$ or $\{x_{i+1},y_i\}\in \Delta$.

(c)\implies (b): Let $P$ be a poset satisfying the assumptions of (c).  Let $F=\{z_1<z_2<\cdots<z_k\}$ be an arbitrary facet  of $\Delta$. First note that $z_1\lessdot z_2\lessdot \cdots\lessdot z_k$. Also assumption $(1)$ of (c) implies that $\{x_i,y_i\}\notin \Delta$ for any $1\leq i\leq n$, and then $|\{x_i,y_i\}\cap F|\leq 1$. We claim that for each facet $F$ of $\Delta$,
 $|\{x_i,y_i\}\cap F|=1$ for any  $1\leq i\leq n$.  One can easily see that for $i=1$ the claim holds true. Indeed if $z_1\in C_1$, then $z_1=x_1$, because otherwise $F \subsetneq F\cup\{x_1\}\in \Delta$, a contradiction. Similarly if $z_1\in C_2$, then $z_1=y_1$. So $|\{x_1,y_1\}\cap F|=1$.   Assume inductively that for any $i=1,\ldots,m-1$, $|\{x_i,y_i\}\cap F|=1$. We show that $|\{x_m,y_m\}\cap F|=1$. We have $x_{m-1}\in F$ or
$y_{m-1}\in F$. Without loss of generality suppose $x_{m-1}\in F$. If $x_m\in F$, we are done. So assume that $x_m\notin F$. Note that $x_{m-1}\neq z_k$, since otherwise $F \subsetneq  F\cup\{x_m\}\in \Delta$, a contradiction. So there exists
$z_t\in F$ such that $x_{m-1}\lessdot z_t$. If $z_t=x_j$ for some $j$, then $j>m$
and hence $x_{m-1}< x_m< z_t$, which contradicts to $x_{m-1}\lessdot z_t$.
Thus $z_t=y_j$ for some $j$ and by $(1)$, $j=m$. Thus $z_t=y_m\in F$. So $|\{x_m,y_m\}\cap F|=1$.
Therefore any facet $F$ of  $\Delta$ has cardinality $n$ such that for any  $1\leq i\leq n$, either $x_i\in F$ or $y_i\in F$. Thus  $\Delta$ is pure.

Let $\mathcal{F}(\Delta)$ denotes the set of facets of $\Delta$. To prove the shellability, consider the ordering on $\mathcal{F}(\Delta)$ as follows. For the facets $F_i$ and $F_j$ of $\Delta$, we set $F_i\prec F_j$ if there exists $1\leq t\leq n$ such that $y_t\in F_j\setminus F_i$ and for any $d<t$, the elements of height $d$ in $F_i$ and $F_j$ are the same. Now, let $F_i\prec F_j$ and $1\leq t\leq n$ be such that $y_t\in F_j\setminus F_i$ and for any $d<t$, the elements of height $d$ in $F_i$ and $F_j$ are the same.
If $\{x_t,y_{t+1}\}\in \Delta$, then we set $F_k=(F_j\setminus \{y_t\})\cup\{x_t\}$. Then $F_k\in \mathcal{F}(\Delta)$, $F_k\prec F_j$ and $F_j\setminus F_k=\{y_t\}$. So we may assume that
$\{x_t,y_{t+1}\}\notin \Delta$. Thus by assumption, $\{x_{t+1},y_t\}\in \Delta$. Let $r$ be the greatest integer with $y_t,y_{t+1},\ldots,y_r\in F_j\setminus F_i$
and $\{x_{s+1},y_s\}\in \Delta$ for any $1\leq s\leq r$. Note that $r$ is well-defined, since $y_t\in F_j\setminus F_i$ and $\{x_{t+1},y_t\}\in \Delta$. Two cases may happen:

$(1)$ Suppose that $y_{r+1}\in F_j\setminus F_i$. Then by the maximality of $r$, $\{x_{r+2},y_{r+1}\}\notin \Delta$. So by assumption $\{x_{r+1},y_{r+2}\}\in \Delta$.
Then  $F_k=(F_j\setminus \{y_{r+1}\})\cup\{x_{r+1}\}\in \mathcal{F}(\Delta)$, $F_k\prec F_j$ and $F_j\setminus F_k=\{y_{r+1}\}$.

$(2)$  Suppose that $y_{r+1}\notin F_j\setminus F_i$. Then either $y_{r+1}\notin F_j$ or $y_{r+1}\in F_i\cap F_j$. In both cases, we set $F_k=(F_j\setminus \{y_r\})\cup\{x_r\}\in \mathcal{F}(\Delta)$ and $F_k$
is the facet which fulfils the desired condition for shellability.
\end{proof}

The next result shows that the linear resolution  property for  $I_{\Delta (P)}$  can be again expressed in terms of conditions on chains of the form $x_i<y_j$ in $P$.
For a graph $G$ by $V(G)$ and $E(G)$ we mean the vertex set and the edge set of $G$, respectively.
Also for a subset $S\subseteq V(G)$, the induced subgraph of $G$ on the set $S$ is denoted by $G_S$.

\begin{Proposition}
\label{linear}
Let $P$ be a poset with the assumptions of Theorem \ref{poset} and $\Delta=\Delta(P)$. Then $I_{\Delta}$ has a linear resolution if and only if
whenever  $\{x_i,y_j\},\{x_r,y_s\}\in \Delta$, then  $\{x_i,y_s\}\in \Delta$ or $\{x_r,y_j\}\in \Delta$.
\end{Proposition}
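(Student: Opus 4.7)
The plan is to identify $I_{\Delta(P)}$ as the edge ideal of a bipartite graph and then invoke Fr\"oberg's theorem characterizing edge ideals with linear resolution. Since $C_1$ and $C_2$ are chains in $P$, every subset of $P$ contained in one of them is already a chain and hence a face of $\Delta(P)$; consequently the minimal non-faces of $\Delta=\Delta(P)$ are exactly the incomparable pairs $\{x_i,y_j\}$, and every incomparable pair has one vertex in $C_1$ and the other in $C_2$. Hence $I_{\Delta}=I(G)$, where $G$ is the bipartite graph on $C_1\cup C_2$ whose edges are the incomparable pairs $\{x_i,y_j\}$. By Fr\"oberg's theorem, $I(G)$ has a linear resolution if and only if the complement graph $G^c$ (on the vertex set $C_1\cup C_2$) is chordal.

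The key structural feature of $G^c$ is that $C_1$ and $C_2$ each induce a clique in $G^c$, since any two elements of a chain of $P$ are comparable and hence form a face of $\Delta$. From this I would first deduce that $G^c$ admits no induced cycle of length $\geq 5$: such a cycle must contain, by pigeonhole, three vertices inside one of $C_1, C_2$, and three vertices on a cycle of length $\geq 5$ cannot all be pairwise cyclically adjacent, so some pair among them would yield a clique-chord. Chordality of $G^c$ therefore reduces to the absence of induced $4$-cycles. A short case analysis according to how the four vertices of a candidate $4$-cycle distribute between $C_1$ and $C_2$ leaves exactly one surviving pattern: two consecutive vertices of $C_1$ followed by two consecutive vertices of $C_2$; every other distribution creates an immediate clique-chord (for instance the alternating pattern $x$--$y$--$x$--$y$ has the clique-chord $x_i x_r$).

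Finally, writing out the adjacency and non-adjacency requirements for a cycle $x_i - x_r - y_j - y_s - x_i$ to be induced in $G^c$ yields precisely the four conditions $\{x_r, y_j\}, \{x_i, y_s\}\in\Delta$ together with $\{x_i, y_j\}, \{x_r, y_s\}\notin\Delta$. After the harmless relabeling $j\leftrightarrow s$, the existence of such a configuration is exactly the negation of the condition stated in the proposition. Combining this with Fr\"oberg's theorem gives the claimed equivalence. The main obstacle is the combinatorial analysis in the second paragraph---verifying that the two embedded cliques $C_1$ and $C_2$ force every induced cycle of length $\geq 5$ in $G^c$ to admit a chord, and pinning down the unique $4$-cycle pattern that survives; once this is settled, the translation between the $G^c$-side and the poset-side is immediate.
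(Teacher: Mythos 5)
Your proposal is correct and follows essentially the same route as the paper: identify $I_{\Delta}$ with the edge ideal of the bipartite graph of incomparable pairs, apply Fr\"oberg's theorem, use the fact that $C_1$ and $C_2$ induce cliques in $G^c$ to dispose of cycles of length at least $5$, and translate the chord condition for the remaining $4$-cycles $x_i$--$x_r$--$y_s$--$y_j$ into the stated condition on $\Delta$. The only difference is that you spell out the case analysis of how a $4$-cycle distributes over the two cliques, which the paper leaves implicit.
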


\begin{proof}
We have $I_{\Delta }=I(G)$, where $G$ is a bipartite graph with bipartition $X=\{x_1,x_2,\ldots,x_n\}$, $Y=\{y_1,y_2,\ldots,y_n\}$ and the edge set $E(G)=\{\{x_i,y_j\}:\ \{x_i,y_j\}\notin \Delta\}$.
Thus by \cite[Theorem 1]{Fro},  $I_{\Delta }$ has a linear resolution if and only if $G^c$ is a chordal graph.
Any cycle $C$ of length $m\geq 5$, has a chord in $G^c$, because $C$ has at least $3$ vertices from $X$ or from $Y$, and $G^c_X$ and $G^c_Y$ are complete graphs.
So $G^c$ is chordal if and only if any cycle of length $4$ in $G^c$ has a chord.
Note that $C:x_i,x_r,y_s,y_j$ is a cycle in $G^c$ if and only if  $\{x_i,y_j\},\{x_r,y_s\}\in \Delta$
and it has a chord if and only if $\{x_i,y_s\}\in \Delta$ or $\{x_r,y_j\}\in \Delta$.
Thus  $G^c$ is a chordal graph if and only if whenever $\{x_i,y_j\},\{x_r,y_s\}\in \Delta$, then $\{x_i,y_s\}\in \Delta$ or $\{x_r,y_j\}\in \Delta$.
\end{proof}

\medskip

For a graph $G$, let $\tau(G)$ be the minimum cardinality of a vertex cover of $G$ and $\nu(G)$ denotes the maximum cardinality of a matching of $G$. One can see that for any graph $G$, $\tau(G)\geq \nu(G)$.
Recall that a graph $G$ is called a {\em  K\"{o}nig graph}, when this inequality becomes an equality.
Let $I\subset S$ be a monomial ideal in the polynomial ring  $S=K[x_1,\ldots,x_n]$ over the field $K$ in $n$ variables. We denote by $\mgrade(I)$ the maximal length of a regular sequence of monomials in $I$, and call this number the {\em monomial grade} of $I$. One has  $\mgrade(I)\leq \grade(I)=\height(I)$. We call $I$ a {\em monomial ideal of K\"onig type} if $I\neq 0$ and $\mgrade(I)=\height(I)$. The naming is justified by the fact that if $I=I(G)$ for some graph $G$, then $\height(I)=\tau(G)$ and $\mgrade(I)=\nu(G)$, so that the edge ideal of K\"onig graphs are the monomial ideals of K\"onig type among edge ideals.

The following theorem characterizes monomial ideals of K\"onig type in terms of existence of some forms of sop's for their quotient rings.

\begin{Theorem}
\label{algebraic}
Let $I\subset S=K[x_1,\ldots,x_n]$ be a monomial ideal. Then the following conditions are equivalent:
\begin{enumerate}
\item[(a)] $I$ is a  monomial ideal of K\"onig type.
\item[(b)] $S/I$ admits a sop $f_1,\ldots,f_d$,  where each $f_k$ is of the form $x_i-x_j$  for suitable $i$ and $j$.
\end{enumerate}
\end{Theorem}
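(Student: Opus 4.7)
My plan is to prove the two implications by passing back and forth between monomial regular sequences in $I$ and partitions of the variable set $\{1,\ldots,n\}$. The bridge is the following setup: for any family $f_1,\ldots,f_d$ with $f_k=x_{i_k}-x_{j_k}$, let $\sim$ be the equivalence relation on $\{1,\ldots,n\}$ generated by $i_k\sim j_k$, let $C_1,\ldots,C_s$ be the equivalence classes, and let $\phi\colon S\to K[y_1,\ldots,y_s]$ send $x_i\mapsto y_l$ whenever $i\in C_l$. Because every $x_i-x_j$ with $i\sim j$ can be written as a telescoping sum of $\pm f_k$'s, the $K$-linear span of the $f_k$'s equals the entire degree-one part of $\ker\phi$, which has dimension $n-s$. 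Hence $(f_1,\ldots,f_d)=\ker\phi$, so $S/(f_1,\ldots,f_d)\cong K[y_1,\ldots,y_s]$ is a polynomial ring of Krull dimension $s$.

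For (a)$\Rightarrow$(b): set $h:=\height(I)$. By the K\"onig-type hypothesis there is a regular sequence of monomials $u_1,\ldots,u_h\in I$. A standard fact forces the supports $\supp(u_1),\ldots,\supp(u_h)$ to be pairwise disjoint; indeed, if some variable $x$ lies in both $\supp(u_i)$ and $\supp(u_j)$, then writing $u_i=x^a v$ with $x\nmid v$ and picking a suitable power produces an element of $S$ outside $(u_i)$ whose product with $u_j$ lies in $(u_i)$, contradicting regularity of $u_i,u_j$. Choose any partition $\{1,\ldots,n\}=C_1\sqcup\cdots\sqcup C_h$ with $\supp(u_l)\subseteq C_l$ (assigning variables not appearing in any $u_l$ to an arbitrary class), list the elements of each $C_l$ in some order, and take the $|C_l|-1$ consecutive differences $x_i-x_{i'}$ along the list. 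This produces $\sum_l(|C_l|-1)=n-h=\dim S/I=:d$ linear forms of the required shape. Under the associated $\phi$, the image $\phi(u_l)$ is a positive power of $y_l$, so each $y_l$ is nilpotent in $K[y_1,\ldots,y_h]/\phi(I)$ and this ring is $0$-dimensional. This identifies $f_1,\ldots,f_d$ as a sop of $S/I$.

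For (b)$\Rightarrow$(a): given such a sop $f_1,\ldots,f_d$, apply the same setup. The $d$ linear forms span the $(n-s)$-dimensional linear part of $\ker\phi$, so $d\geq n-s$; combined with $d=\dim S/I=n-h$, this gives $s\geq h$. On the other hand, $\dim K[y_1,\ldots,y_s]/\phi(I)=\dim S/(I,f_1,\ldots,f_d)=0$ forces every $y_l$ to be nilpotent modulo the monomial ideal $\phi(I)$. Hence $y_l^{e_l}\in\phi(I)$ for some $e_l\geq 1$, and since $\phi(I)$ is monomial some minimal monomial generator $u_l$ of $I$ satisfies $\phi(u_l)\mid y_l^{e_l}$; equivalently $\supp(u_l)\subseteq C_l$. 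The monomials $u_1,\ldots,u_s$ have pairwise disjoint supports, hence form a regular sequence in $I$. This yields $\mgrade(I)\geq s\geq h=\height(I)$, which together with the trivial inequality $\mgrade(I)\leq\height(I)$ gives $\mgrade(I)=\height(I)$, i.e., $I$ is of K\"onig type.

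The main technical hurdle is organising the dimension counts so that the two numbers $d=n-h$ and $\dim\ker\phi=n-s$ pin down $s=h$, and then leveraging $s=h$ to extract $h$ pairwise coprime monomials of $I$ from the partition (respectively, to build a partition from the regular sequence). The elementary but essential input is the pairwise-disjoint-support property of monomial regular sequences, which underlies both directions and tightly links the combinatorics of $\mgrade(I)$ to the shape of allowable parameter systems.
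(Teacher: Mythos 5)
Your proof is correct, and it splits interestingly against the paper's. The (b)$\Rightarrow$(a) direction is essentially the paper's own argument: reduce modulo the linear forms, note that the resulting zero-dimensional monomial quotient must contain a pure power of each surviving variable, lift these to generators of $I$ whose supports land in distinct classes and are therefore pairwise disjoint, and conclude $\mgrade(I)=\height(I)$. For (a)$\Rightarrow$(b) you take a genuinely different route: the paper argues by induction on $\dim S/I$, at each step choosing $f=x_i-x_j$ with both variables in the support of a single generator of the regular sequence, verifying by a prime-avoidance argument that $f$ misses every minimal prime of height $h$, and passing to the contracted ideal, which is again of K\"onig type. You instead build the whole sop at once from a partition $C_1\sqcup\cdots\sqcup C_h$ of $[n]$ refining the pairwise disjoint supports of $u_1,\ldots,u_h$, and verify the sop property globally via $S/(I,f_1,\ldots,f_d)\cong K[y_1,\ldots,y_h]/\phi(I)$, where each $\phi(u_l)$ is a positive power of $y_l$; this is shorter and avoids both the induction and the prime-avoidance step. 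The one thing the paper's construction buys that yours does not is that every $f_k=x_i-x_j$ can be taken with $x_i,x_j$ inside the support of a single minimal generator of $I$ --- the refinement later exploited in Corollary \ref{konigsop} (each $f$ corresponds to an edge of $G$) and Theorem \ref{reg}; your assignment of variables outside $\bigcup_l\supp(u_l)$ to arbitrary classes can produce differences $x_i-x_j$ with $x_ix_j$ not a generator (already for a star graph), so you prove the theorem as stated but not that stronger form. Both arguments rest on the same elementary input, that a regular sequence of monomials has pairwise disjoint supports; your reduction of this to pairs tacitly uses permutability of regular sequences in the graded setting, which is harmless, and the paper asserts the same fact with no more justification.
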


\begin{proof}
(a) \implies (b):  Let $h=\height (I)$. Since $I$ is a monomial ideal of  K\"onig type, there exists a regular sequence of monomials $u_1,\ldots,u_h$ with $u_i\in I$ for $i=1,\ldots,h$. We may assume that each $u_i$ belongs to the unique minimal set of monomial generators  $\mathcal{G}(I)$  of $I$. Indeed, if $u_i=wv$ with $v\in \mathcal{G}(I)$, then we may replace $u_i$ by $v$ in the above regular sequence.

For a monomial $u\in S$ we set $\supp(u)=\{i\:  x_i|u\}$. We may assume that $\Union_{u\in \mathcal{G}(I)}\supp(u)=[n]$. Indeed, suppose this is not the case. Then for simplicity we may assume that $\Union_{u\in \mathcal{G}(I)}\supp(u)=\{1,\ldots,r\}$. Note that $r\geq 1$, since $I\neq 0$. Let $R=K[x_1,\ldots,x_r]/(u\:\; u\in \mathcal{G}(I))$. Then $S/I=R[x_{r+1},\ldots, x_n]$, and $x_1-x_{r+1},\ldots, x_1-x_n$ is part of a sop of $S/I$  and $(S/I)/(x_1-x_{r+1},\ldots, x_1-x_n)\iso R$. Thus if $R$ has the desired sop, then so does $S/I$.

We proceed by induction on $d=\dim S/I$. If $d=0$, then there is nothing to show. Suppose now that $d>0$. Then at least one  $u_i$ is not a  pure power. Assume this is not the case. After a relabeling of the variables we may then assume that $u_i=x_i^{a_i}$ with $a_i>1$ for $i=1,\ldots, h$. Since $d>0$ and $\Union_{u\in \mathcal{G}(I)}\supp(u)=[n]$, there must exist $u\in \mathcal{G}(I)$, with $\supp(u)\not\in \{1,\ldots,h\}$. Then $\height(I)\geq \height (u_1,\ldots,u_h,u)\geq h+1$, a contradiction.

We may assume that $u_1$ is not a pure power, say $u_1=x_1^{a_1}x_2^{a_2}\cdots x_n^{a_n}$ with $a_1, a_2>0$. Let $f_1=x_1-x_2$. We claim that $f_1$ is a part of a sop. In other words, $f_1$ is not contained in any minimal prime ideal $P$ of $I$ with $\height(P) =\height(I)=h$. Indeed, let $P$ be a minimal prime ideal containing $f_1$. Since $I$ is a monomial ideal, $P$ is a monomial prime ideal. Therefore, if $f_1\in P$, then $x_1,x_2\in P$.  For each $i$ there exists $j_i$ such that $x_{j_i}$ divides $u_i$ and $x_{j_i}\in P$. Thus, $Q=(x_1,x_2, x_{j_2},\cdots, x_{j_d}) \subset P$. Since $u_1,\ldots,u_h$ is a regular sequence, the supports of the $u_i$ are pairwise disjoint. This implies that  the variables generating $Q$ are  pairwise distinct. It  follows that $\height(P)\geq h+1$, and $f_1$ is a part of a sop.

Identifying $x_1$ with $x_2$, we see that $(S/I)/(f_1)\iso K[x_2,\ldots,x_n]/\overline{I}$,  where $\overline{I}= (x_2^{a_1+a_2}\cdots x_n^{a_n}, u_2,\ldots,u_h, \cdots)$. This shows that $\mgrade(\overline{I})\geq h$. Since $f_1$ is a parameter element of $S/I$ it follows that $\height(\overline{I})=\height (I)=h$ and  since in general $\mgrade(\overline{I})\leq \height(\overline{I})$, we must have $\mgrade(\overline{I})= \height(\overline{I})$. This means that $\overline{I}$ is a again monomial ideal of K\"onig type. Since $\dim((S/I)/(f_1))= d-1$, we may apply our induction hypothesis, and find a sop $f_2,\ldots, f_d$ of $(S/I)/(f_1)$ as required in (b), Then $f_1, f_2,\ldots,f_d$ is the desired  sop for $S/I$,

(b)\implies (a): Let $R=S/I$ and  $\overline{R}=R/(f_1,\ldots,f_d)R$.  Then $\overline{R}\iso \overline{S}/\overline{I}$, where $\overline{I}$ is a monomial ideal, because reduction modulo  $f_1,\ldots,f_d$ simply identifies variables. For simplicity we may assume that $\overline{S}=K[x_1,\ldots,x_h]$. Since $\dim(\overline{R})=0$ and since $\overline{I}$ is a monomial ideal, it follows that $\mathcal{G}(\overline{I})$ contains a  pure power $x_i^{a_i}$ of each the variables $x_1,\ldots,x_h$. Let $u_1,\ldots,u_h$ be generators of $I$ with the property that $u_i$ specializes to $x_i^{a_i}$ under the reduction modulo $f_1,\ldots,f_d$. Suppose  $u_i$ and $u_j$ have a common factor for $i\neq j$. Then this is also the case for $x_i^{a_i}$ and $x_j^{a_j}$, a contradiction. Therefore,  $u_1,\ldots,u_h$ is a regular sequence, and so $\mgrade (I)=\height(I)$.
\end{proof}

Applying Theorem \ref{algebraic} to the edge ideal of K\"{o}nig graphs, we have the following algebraic characterization for a K\"{o}nig graph $G$ in terms of  special sop's for $R=S/I(G)$.
Note that by the proof of Theorem \ref{algebraic}, each elements $x_i-x_j$ of the sop for $R=S/I(G)$ corresponds to an edge $e=\{x_i,x_j\}$ of $G$.

\begin{Corollary}\label{konigsop}
Let $G$ be a graph without isolated vertices, $S=K[V(G)]$ and for any edge $e=\{x,y\}\in E(G)$, let $f_e=x-y$ be an element in $S$. Then $G$ is a K\"{o}nig graph if and only if there exists a subset $\{e_1,\ldots,e_d\}$ of edges of $G$ such that $f_{e_1},\ldots,f_{e_d}$ is a sop for $R=S/I(G)$.
\end{Corollary}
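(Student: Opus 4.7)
The statement specializes Theorem~\ref{algebraic} to edge ideals: since $\height(I(G))=\tau(G)$ and $\mgrade(I(G))=\nu(G)$, the ideal $I(G)$ is a monomial ideal of K\"onig type if and only if $G$ is a K\"onig graph. So the work is to transfer Theorem~\ref{algebraic} from arbitrary differences of variables to differences coming from edges of $G$.

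The direction $(\Leftarrow)$ is immediate: if the $f_{e_k}$ form a sop and each has the form $x_i-x_j$, then the implication (b)$\Rightarrow$(a) of Theorem~\ref{algebraic} yields that $I(G)$ is of K\"onig type, so $G$ is a K\"onig graph.

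For $(\Rightarrow)$ the plan is to revisit the inductive construction from the proof of Theorem~\ref{algebraic} applied to $I=I(G)$ and check that at each stage the chosen sop element can be realized as $x_p-x_q$ for some edge $\{p,q\}\in E(G)$. At the $k$-th step, that proof produces a non-pure-power minimal generator $u$ of the current reduced monomial ideal $\overline{I}$. Because $I(G)$ is generated by squarefree monomials of degree two and reduction modulo differences of variables only identifies variables, every minimal generator of $\overline{I}$ still has degree two, so $u=x_ax_c$ with $a\neq c$. Moreover, $\overline{I}$ is generated by the images of $\mathcal{G}(I(G))$, so this $u$ must be the image of some edge $\{p,q\}\in E(G)$ where $p$ lies in the equivalence class of $x_a$ and $q$ in the class of $x_c$ under the identifications induced by $f_1,\ldots,f_{k-1}$. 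Setting $f_k:=x_p-x_q\in S$ then reduces to $x_a-x_c$ modulo $f_1,\ldots,f_{k-1}$, so $f_k$ serves as the $k$-th sop element while simultaneously being $f_{e_k}$ for the edge $e_k=\{p,q\}$. Iterating for $k=1,\ldots,d=\dim R$ produces the required sop; the only nontrivial point is precisely this ``lifting to an edge'' at each step, and it is automatic because $\overline{I}$ is generated exactly by the images of the edges of $G$.
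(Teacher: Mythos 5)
Your proposal is correct and follows the same route as the paper: the paper simply deduces the corollary from Theorem~\ref{algebraic} together with the observation that, in the inductive construction of the sop in its proof, each difference $x_i-x_j$ arises from a minimal generator of the (reduced) edge ideal and hence lifts to an edge of $G$ (the no-isolated-vertices hypothesis ensuring the non-edge parameters $x_1-x_{r+1},\ldots,x_1-x_n$ from that proof are never needed). Your write-up just makes this "corresponds to an edge" step explicit, which is exactly what the paper leaves to the reader.
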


Let $R$ be a graded ring and $M$  a finitely generated graded $R$-module.
It is known that for an $M$-superficial sequence  $f_1,\ldots,f_m$ of linear forms $\reg(R/(f_1,\ldots,f_m)R)\leq \reg(R)$, see \cite[Proposition 20.20]{Eis}.
In view of this fact it is natural to ask the following

\begin{Question}\label{conjec1}
Let $R$ be a graded ring, $M$ be a finitely generated graded $R$-module and $f_1,\ldots,f_m$ be a sop of linear forms for $M$.
Does the inequality  $$\reg(M/(f_1,\ldots,f_m)M)\leq \reg(M)$$ hold?
\end{Question}

In the following theorem we  prove   the expected inequality for the $S$-module $R=S/I(G)$, when $G$ is a  K\"{o}nig graph and  the sop is of a natural special form.
First we recall some definitions. Two edges $e$ and $e'$ of a graph $G$ form a \emph{gap}, when no endpoint of $e$ and $e'$ are adjacent in $G$. Otherwise we say that $e$ and $e'$ are \emph{adjacent}.  Moreover, a subset $A$ of edges of $G$ is called an \emph{induced matching} if any two edges in $A$ form a gap. By $\a(G)$ we mean the maximum cardinality of an induced matching of $G$. The maximum cardinality of an independent set of $G$ is denoted by $\alpha(G)$.
For a graph $G$, with the vertex set $\{x_1,\ldots,x_n\}$ the\emph{ whiskered graph} of $G$ is a graph which is obtained by adding new vertices $\{y_1,\ldots,y_n\}$ and edges $\{\{x_i,y_i\}:\  1\leq i\leq n\}$ to $G$. This new graph is denoted by $G\cup W(G)$ and the edges $\{x_i,y_i\}$ are called \emph{whiskers}.

\begin{Theorem}\label{reg}
Let $G$ be a  graph, $S=K[V(G)]$ and $\{e_1,\ldots,e_m\}\subseteq E(G)$ such that $f_{e_1},\ldots,f_{e_m}$ is a sop for $R=S/I(G)$.
Then $$\reg(R/(f_{e_1},\ldots,f_{e_m})R)\leq \reg(R).$$
\end{Theorem}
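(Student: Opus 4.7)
The plan is to analyze $\overline{R}:=R/(f_{e_1},\ldots,f_{e_m})R$ directly as an Artinian graded ring and read off its regularity in combinatorial terms.

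Since each $f_{e_i}=x-y$ with $e_i=\{x,y\}\in E(G)$, modding out by $f_{e_1},\ldots,f_{e_m}$ identifies the two endpoints of each $e_i$, defining an equivalence relation on $V(G)$ whose classes I denote by $V_1,\ldots,V_k$, with $k=n-m$. Choosing one variable $z_i$ per class gives $\overline{R}\iso T/\overline{I}$ with $T=K[z_1,\ldots,z_k]$, where $\overline{I}$ is the monomial image of $I(G)$: an edge $\{u,v\}\in E(G)$ contributes $z_iz_j$ if $u\in V_i$, $v\in V_j$ with $i\neq j$, or $z_i^2$ when both endpoints lie in $V_i$. Since the $f_{e_i}$ form a sop, $\overline{R}$ is zero-dimensional; because $\overline{I}$ is generated in degree two, this forces $z_i^2\in\overline{I}$ for every $i$, i.e.\ every class $V_i$ contains both endpoints of at least one edge of $G$. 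Letting $G'$ be the simple graph on $\{z_1,\ldots,z_k\}$ with $\{z_i,z_j\}\in E(G')$ (for $i\neq j$) whenever some edge of $G$ crosses between $V_i$ and $V_j$, I obtain
\[
\overline{I}=(z_1^2,\ldots,z_k^2)+I(G').
\]

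The $K$-basis of $\overline{R}$ then consists exactly of the squarefree monomials $\prod_{i\in F}z_i$ for $F\subseteq[k]$ an independent set of $G'$: the generators $z_i^2$ forbid non-squarefree monomials, while $I(G')$ forbids squarefree monomials whose support contains an edge of $G'$. Since $\overline{R}$ is Artinian, its regularity equals its top nonvanishing degree, whence $\reg(\overline{R})=\alpha(G')$, the independence number of $G'$. I then promote $\alpha(G')$ to a bound in $G$: for an independent set $F=\{i_1,\ldots,i_t\}$ of $G'$ with $t=\alpha(G')$, pick one internal edge $e_{i_j}^{*}\subset V_{i_j}$ in each $V_{i_j}$. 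These edges occupy pairwise disjoint classes, and the absence of cross edges between $V_{i_j}$ and $V_{i_l}$ (equivalently, $\{i_j,i_l\}\notin E(G')$) prevents adjacency in $G$; hence $\{e_{i_1}^{*},\ldots,e_{i_t}^{*}\}$ is an induced matching of $G$, giving $\a(G)\ge \alpha(G')$.

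Combining this with the standard Katzman inequality $\reg(R)=\reg(S/I(G))\ge \a(G)$ yields
\[
\reg(\overline{R})=\alpha(G')\le \a(G)\le \reg(R),
\]
as required. The principal technical point is the structural description of $\overline{I}$ together with the consequent combinatorial reading of $\reg(\overline{R})$ as $\alpha(G')$; once that is in place the inequality is immediate.
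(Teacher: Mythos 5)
Your proof is correct, and it follows the same overall skeleton as the paper's argument (present $\overline{R}$ as an Artinian monomial quotient, identify its regularity with the independence number of an auxiliary graph on the contracted classes, show that independent sets there lift to induced matchings of $G$, and finish with Katzman's bound $\a(G)\leq\reg(S/I(G))$), but you handle the key computational step differently. Where you simply observe that $\overline{R}$ has a $K$-basis of squarefree monomials indexed by independent sets of $G'$ and that the regularity of a finite-length graded module is its top nonvanishing degree, so $\reg(\overline{R})=\alpha(G')$, the paper instead polarizes $L=(z_1^2,\dots,z_k^2)+I(G')$ to the edge ideal of the whiskered graph $H'=H\cup W(H)$ and invokes the theorem of Mahmoudi et al.\ that $\reg(T/I(H'))=\a(H')$ for very well-covered graphs, then identifies $\a(H')$ with $\alpha(H)$. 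Your route is more elementary and self-contained at this point, avoiding both polarization and the external regularity theorem; the paper's route has the mild advantage of placing $\overline{R}$ inside the well-studied framework of whiskered graphs. Two small remarks: your count $k=n-m$ of classes tacitly uses that $\{e_1,\dots,e_m\}$ is a forest, which does hold (otherwise the linear forms $f_{e_i}$ would be linearly dependent and could not be a sop), but it is not needed for the argument; and your observation that zero-dimensionality forces $z_i^2\in\overline{I}$ for every $i$ is exactly the mechanism the paper extracts from the proof of Theorem~\ref{algebraic}.
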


\begin{proof}
Let $e_i=\{x_i,y_i\}$ for any $1\leq i\leq m$.
Then by the proof of Theorem \ref{algebraic},  $\bigcup_{i=1}^m e_i=V(G)$ and without loss of generality
we may assume that for some $r\leq m$, $C=\{x_1,x_2,\ldots,x_r\}$ is a minimal vertex cover of $G$ and $\{e_1,\ldots,e_r\}$ is a maximal matching of $G$.
Set $\overline{R}=R/(f_{e_1},\ldots,f_{e_m})R$.
Then $\overline{R}\cong K[x_1,x_2,\ldots,x_r]/L$ where
$$L=(x_1^2,\ldots x_r^2,x_ix_j:\ i<j\leq r,\ \textrm{$e_i$ and $e_j$ are adjacent in $G$}).$$
Thus $\reg(\overline{R})=\reg( K[x_1,\ldots,x_r]/L)$.
Since polarization does not change the regularity, one has  $\reg( K[x_1,\ldots,x_r]/L)=\reg(T/I(H'))$, where $T=K[x_1,\ldots,x_r,x'_1,\ldots,x'_r]$ for new variables $x'_1,\ldots,x'_r$ and $H'=H\cup W(H)$, where $H$ is  a graph on $\{x_1,\ldots,x_r\}$ with $$E(H)=\{\{x_i,x_j\}:\ i<j\leq r,\  \textrm{$e_i$ and $e_j$ are adjacent in $G$}\}$$ and  $\{x_i,x'_i\}$ is a whisker of $H'$ for any $1\leq i\leq r$.
Since any whiskered graph is very well-covered, by \cite[Theorem 1.3]{MMCRY}, $\reg(T/I(H'))=\a(H')$. One can easily see that
$\a(H')$ is precisely the maximum size of independent sets of vertices in  $H$. Moreover, by the definition of $H$, a set $\{x_{s_1},\ldots,x_{s_t}\}$ of vertices of $H$ is an independent set of $H$ if and only if $\{e_{s_1},\ldots,e_{s_t}\}\subseteq \{e_1,\ldots,e_m\}$ is an induced matching of $G$. Therefore $\a(H')\leq \a(G)$. Thus $\reg(\overline{R})=\reg( K[x_1,\ldots,x_r]/L)=\reg(T/I(H'))=\a(H')\leq \a(G)\leq \reg(R)$. The last inequality holds by \cite[Lemma 2.2]{K}.
\end{proof}

In general the inequality of Theorem \ref{reg} may be strict. Let $G=C_{n}$ denotes the cycle graph with $n$ vertices. Suppose $E(G)=\{e_i=\{x_i,x_{i+1}\}: 1\leq i\leq n-1\}\cup\{e_{n}=\{x_1,x_{n}\}\}$ and $n\equiv 2 \mod 4$. Let  $n=4m+2$ for some $m\geq 1$. Then $\{f_{e_{2i-1}}: \ 1\leq i\leq 2m+1\}$ is a sop for $R=S/I(G)$.
With the notation used in the proof of Theorem \ref{reg}, $H'=C_{2m+1}\cup W(C_{2m+1})$ and $\reg(\overline{R})=\a(H')=\alpha(H)=m$. Since
$\{e_{3i+1}: 0\leq i\leq m\}$ is an induced matching of $G$ of cardinality $m+1$, $\reg(R)\geq a(G)\geq m+1>m=\reg(\overline{R})$.

\vskip 3mm

For a graph $G$, let $\mi(G)$ denote the number of maximal independent sets of $G$.
After  Erd\"{o}s and Moser considered the problem of determining the largest value of $\mi(G)$
in terms of the number of vertices of $G$, investigating this number and upper bounds for it has been studied for various classes of graphs.
In \cite[Corollary 3.4]{HTT}  it was shown that for a K\"{o}nig graph $G$, $2^{\nu(G)}$ is an upper bound for $\mi(G)$.
Also in \cite[Theorem 1]{A} it was proved that $\mi(G)\leq M(G)+1$, where $M(G)$ is the number of induced matchings in $G$.
In the following for an unmixed K\"{o}nig graph $G$, we use a sop of the form $f_{e_1},\ldots,f_{e_d}$ for $S/I(G)$ and improve the upper bound for $\mi(G)$.
 Moreover, we give a combinatorial description of the Cohen-Macaulay property for unmixed K\"{o}nig graphs. A different combinatorial characterization of Cohen-Macaulay K\"{o}nig graphs is presented in \cite[Proposition 28]{CCR}.
 Recall that a graph $G$ is called \emph{unmixed} if all the minimal vertex covers (maximal independent sets) of $G$ are of the same cardinality.

\begin{Theorem}\label{im}
Let $G$ be an unmixed K\"{o}nig graph, $\{e_1,\ldots,e_m\}$ be a maximal matching of $G$ with $\tau(G)=\nu(G)=m$ and $k$ be the number of induced matchings of $G$ contained in $\{e_1,\ldots,e_m\}$. Then
\begin{enumerate}
\item[($a$)] $\mi(G)\leq k+1$,  and
\item[($b$)] $G$ is a Cohen--Macaulay graph if and only if $\mi(G)=k+1$.
\end{enumerate}
\end{Theorem}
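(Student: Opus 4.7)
The plan is to deploy Corollary~\ref{konigsop} to produce a system of parameters for $R=S/I(G)$ consisting of differences of variables, and then to translate multiplicity into a combinatorial count via Proposition~\ref{graded}. First I would use Corollary~\ref{konigsop} to extend the given maximum matching to a sop $f_{e_1},\ldots,f_{e_d}$ of $R$ with $d=\dim R=n-m$, whose first $m$ entries are the given matching edges; the remaining $d-m$ differences (which occur only when $n>2m$) are edges of $G$ used to merge each non-matched vertex into some existing class. Writing $\bar R=R/(f_{e_1},\ldots,f_{e_d})R$, Proposition~\ref{graded} with all $a_i=1$ gives
\[
\ell(\bar R)\geq e(R),
\]
with equality if and only if $R$ is Cohen--Macaulay. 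Since $G$ is unmixed, the independence complex $\Delta(G)$ is pure, so the multiplicity of its Stanley--Reisner ring equals the number of facets, namely $e(R)=\mi(G)$.

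Next I would analyse $\bar R$ directly. The sop partitions $V(G)$ into exactly $m$ equivalence classes, one per matching edge: class $i$ is the merged union containing $\{x_i,y_i\}$ together with any non-matched vertices identified into it. Hence $\bar R\cong K[\xi_1,\ldots,\xi_m]/\bar J$, where $\bar J$ contains $\xi_i^2$ for every $i$ (contributed by the matching edge $e_i$ inside class $i$) and $\xi_i\xi_j$ for every pair $i\neq j$ for which some edge of $G$ joins class $i$ to class $j$. Consequently the nonzero squarefree monomials of $\bar R$ are indexed by those subsets $T\subseteq[m]$ admitting no inter-class edges among the classes of $T$; for each such $T$ the collection $\{e_i:i\in T\}$ is in particular an induced matching of $G$ contained in $\{e_1,\ldots,e_m\}$. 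Counting the empty subset separately,
\[
\ell(\bar R)\leq k+1.
\]

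Chaining $\mi(G)=e(R)\leq\ell(\bar R)\leq k+1$ establishes (a). For the ``if'' direction of (b), $\mi(G)=k+1$ forces equality throughout, so $e(R)=\ell(\bar R)$ and Proposition~\ref{graded} gives that $R$ is Cohen--Macaulay. For the converse I would invoke the standard fact that a Cohen--Macaulay graph without isolated vertices admits a perfect matching, so $|V(G)|=2\nu(G)$; for our K\"onig graph this reads $n=2m$. Under this condition the sop consists of the matching edges only, each equivalence class reduces to $\{x_i,y_i\}$ with no extras, and the inclusion between induced matchings in $\{e_1,\ldots,e_m\}$ and indexing subsets for nonzero monomials of $\bar R$ becomes a bijection, so $\ell(\bar R)=k+1$ exactly; Cohen--Macaulayness then yields $\mi(G)=k+1$.

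The main obstacle is this converse direction of (b): in general the inequality $\ell(\bar R)\leq k+1$ can be strict when $n>2m$, because a non-matched vertex merged into class $i$ may be adjacent in $G$ to a vertex of class $j$ even though $e_i$ and $e_j$ form no adjacency, annihilating the monomial attached to a genuine induced matching. Ruling this out under the Cohen--Macaulay hypothesis is precisely where the structural input $n=2m$ is essential.
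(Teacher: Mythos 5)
Your overall route is the same as the paper's: extend the maximum matching to a sop of edge-differences, identify $\bar R$ with $K[\xi_1,\ldots,\xi_m]/\bar J$, observe that the surviving squarefree monomials inject into the induced matchings contained in $\{e_1,\ldots,e_m\}$, and compare $e(R)=\mi(G)$ with $\length(\bar R)$ via Proposition~\ref{graded}. Parts (a) and the ``if'' direction of (b) are correct as you argue them, and you rightly isolate the one delicate point, which the paper passes over silently: when $n>2m$ the extra identifications could make $\bar J$ strictly larger than the ideal recording adjacencies of $e_i$ and $e_j$, so that $\length(\bar R)<k+1$.

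The flaw is in how you close that gap. The ``standard fact'' you invoke --- that a Cohen--Macaulay graph without isolated vertices admits a perfect matching --- is false: $C_5$ (or any odd cycle $C_3$, $C_5$, $C_7$) is Cohen--Macaulay, has no isolated vertices, and has no perfect matching. What is true, and is all you need, is that an \emph{unmixed K\"onig} graph without isolated vertices has a perfect matching; this follows from a one-line argument rather than a citation: if $u$ is a vertex missed by the maximum matching $\{e_1,\ldots,e_m\}$, pick a neighbour $x_i$ of $u$ in the minimal vertex cover and extend $\{x_i\}$ to a maximal independent set $A$; then $A$ contains neither $u$ nor $y_i$, and since $A$ meets each $e_j$ in at most one vertex, $|A|\leq m+|U|-1 = n-m-1<\alpha(G)$, contradicting unmixedness. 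With this repair your argument is complete; moreover, since the hypothesis used is unmixedness rather than Cohen--Macaulayness, the conclusion $n=2m$ holds throughout the theorem, so every class is exactly $\{x_i,y_i\}$, $\length(\bar R)=k+1$ on the nose, and the same observation is in fact needed to justify the presentation of $L$ asserted without comment in the paper's own proof.
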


\begin{proof}
Let $S=K[V(G)]$, $R=S/I(G)$ and $\dim(R)=d$. By the proof of Theorem~\ref{konigsop}, there exist $e_{m+1},\ldots,e_d\in E(G)$ such that $f_{e_1},\ldots,f_{e_d}$ is a sop for $R$.
We may assume that $\{x_1,x_2,\ldots,x_m\}$ is a minimal vertex cover of $G$, where $x_i\in e_i$ for all $1\leq i\leq m$.
Then $R/(f_{e_1},\ldots,f_{e_d})R\cong K[x_1,x_2,\ldots,x_m]/L$, where
$$L=(x_1^2,x_2^2,\ldots,x_m^2,x_ix_j:\ i<j\leq m,\ \textrm{$e_i$ and $e_j$ are adjacent in $G$}).$$
So $\length (R/(f_{e_1},\ldots,f_{e_d})R)=\length (K[x_1,x_2,\ldots,x_m]/L)$. Since $x_1^2,x_2^2,\ldots,x_m^2\in L$, any basis element of $K[x_1,x_2,\ldots,x_m]/L$ other than $1$, is an squarefree monomial of the form $x_{i_1}\cdots x_{i_r}$, where $\{e_{i_1},\ldots,e_{i_r}\}\subseteq \{e_1,\ldots,e_m\}$ is an induced matching of $G$.
So $\length (K[x_1,x_2,\ldots,x_m]/L)=k+1$.
Note that $R$ is the Stanley-Reisner ring of the independence complex $\Delta_G$ of $G$. Since $G$ is unmixed, $e(R)$ is the number of facets of $\Delta_G$ which is equal to $\mi(G)$.
Now, by using Proposition \ref{graded}, $$\mi(G)=e(R)\leq \length (R/(f_{e_1},\ldots,f_{e_d})R)=\length (K[x_1,x_2,\ldots,x_m]/L)=k+1$$ and equality holds if and only if $G$ is Cohen-Macaulay.
\end{proof}

Theorem \ref{im} shows that in a Cohen-Macaulay K\"{o}nig graph $G$, no matter which maximal matching $M$ of $G$ we choose, if $|M|=\nu(G)$, then the number of induced matchings of $G$ contained in
$M$ is equal to $\mi(G)$.

\section{A universal system of parameters for Stanley--Reisner rings}
\label{sec:universal}

Let $K$ be a field and $\Delta$ an arbitrary simplicial complex on $[n]$  of dimension $d-1$,  and let $S=K[x_1,\ldots, x_n]$ be the polynomial ring in $n$ variables.
We show that there exists a universal standard sop for $K[\Delta]=S/I_{\Delta}$. Using this sop we present a criterion for the Cohen-Macaulayness of $K[\Delta]$.

For $i=1,\dots,d $, we set
\[
p_i(\Delta)=\sum_{F\in \Delta\atop |F|=i}\xb_F,
\]
where $\xb_F=\prod_{i\in F}x_i$.
\begin{Theorem}
\label{standard}
The residue classes of  the elements $p_1(\Delta),\ldots,p_d(\Delta)$ in $K[\Delta]$ form a sop of $K[\Delta]$.
\end{Theorem}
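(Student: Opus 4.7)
The plan is to show that $K[\Delta]/(p_1(\Delta),\ldots,p_d(\Delta))$ has Krull dimension zero, which, since $\dim K[\Delta]=d$, is equivalent to the assertion that the residue classes of $p_1(\Delta),\ldots,p_d(\Delta)$ form a sop. Concretely, I must prove that every minimal prime of $I_\Delta+(p_1(\Delta),\ldots,p_d(\Delta))$ in $S$ equals the graded maximal ideal $\mm=(x_1,\ldots,x_n)$.

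First I would recall that the minimal primes of $I_\Delta$ are the ideals $P_F=(x_i\: i\notin F)$ as $F$ ranges over the facets of $\Delta$. Since every minimal prime of $I_\Delta+(p_1(\Delta),\ldots,p_d(\Delta))$ contains some such $P_F$, it suffices to check, for each facet $F$, that the only minimal prime of $P_F+(p_1(\Delta),\ldots,p_d(\Delta))$ in $S$ is $\mm$.

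Next I would compute the images of the $p_j(\Delta)$ in $S/P_F\iso K[x_i\: i\in F]$. If $F'\in\Delta$ is not contained in $F$, then $F'$ contains a vertex outside $F$, so $\xb_{F'}\in P_F$; conversely every subset of $F$ automatically lies in $\Delta$ because $F$ is a face. Setting $m=|F|$, it follows that
\[
p_j(\Delta)\equiv \sum_{F'\subseteq F,\,|F'|=j}\xb_{F'}=e_j(x_i\: i\in F)\pmod{P_F}
\]
for $1\le j\le m$, while $p_j(\Delta)\equiv 0\pmod{P_F}$ for $m<j\le d$, where $e_j$ denotes the $j$-th elementary symmetric polynomial.

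Finally I would invoke the classical fact that $e_1,\ldots,e_m$ form a sop in $K[x_i\: i\in F]$: each $x_k$ with $k\in F$ is a root of the monic polynomial $\prod_{\ell\in F}(t-x_\ell)=t^m-e_1t^{m-1}+\cdots+(-1)^me_m$, so $K[x_i\: i\in F]$ is integral over $K[e_1,\ldots,e_m]$ and hence $K[x_i\: i\in F]/(e_1,\ldots,e_m)$ is a finite dimensional $K$-algebra. Pulling back to $S$, this forces $\mm$ to be the only minimal prime of $P_F+(p_1(\Delta),\ldots,p_d(\Delta))$, and assembling these facet-by-facet conclusions yields $\dim K[\Delta]/(p_1(\Delta),\ldots,p_d(\Delta))=0$. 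The only substantive point is the identification of $p_j(\Delta)\pmod{P_F}$ with the elementary symmetric polynomial in the variables indexed by $F$; this rests on the elementary observation that every subset of a face is itself a face, and is what makes the construction universal (independent of the base field and depending only on the faces of $\Delta$).
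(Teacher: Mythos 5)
Your proof is correct, but it takes a genuinely different route from the paper. The paper first treats the full simplex $\Gamma_n$, where $p_i(\Gamma_n)$ is the $i$-th elementary symmetric polynomial in all $n$ variables, and proves $\dim S/(p_1,\ldots,p_n)=0$ by an explicit initial-ideal computation: it exhibits elements $g_i=\sum_{j=1}^i(-1)^{j+1}x_i^{i-j}p_j$ with $\ini_<(g_i)=x_i^i$ in the reverse lexicographic order, so that $(x_1,x_2^2,\ldots,x_n^n)\subseteq \ini_<(J)$. The general case then follows from the observation that $(I_\Delta,p_1(\Delta),\ldots,p_d(\Delta))=(I_\Delta,p_1(\Gamma_n),\ldots,p_n(\Gamma_n))$, since $\xb_F\in I_\Delta$ for every non-face $F$. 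You instead work facet by facet: every minimal prime of $I_\Delta+(p_1(\Delta),\ldots,p_d(\Delta))$ contains some $P_F=(x_i\: i\notin F)$ with $F$ a facet, and modulo $P_F$ each $p_j(\Delta)$ becomes the elementary symmetric polynomial $e_j$ in the variables indexed by $F$ (or $0$ for $j>|F|$), whereupon the classical integrality of $K[x_i\: i\in F]$ over $K[e_1,\ldots,e_m]$ finishes the argument. Both reductions rest on the same elementary observation about which monomials survive, and both ultimately need that elementary symmetric polynomials form a sop; you import this as a known fact via integrality, while the paper reproves it by a Gr\"obner computation. Your argument is shorter and handles non-pure complexes transparently; the paper's computation buys the extra information recorded in its subsequent Remark, namely that $\ini_<(p_1,\ldots,p_n)=(x_1,x_2^2,\ldots,x_n^n)$, which does not follow from the integrality argument alone. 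One small point worth making explicit in your write-up: a prime containing $I_\Delta=\bigcap_F P_F$ contains one of the $P_F$ because a prime containing a finite intersection of ideals contains one of them; with that noted, the facet-by-facet reduction is complete.
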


\begin{proof}
We first consider the case that $\Delta$ is the $n$-simplex $\Gamma_n$. In that case $K[\Delta]=S$, and we have to show that  $\dim S/(p_1(\Delta),\ldots,p_n(\Delta))=0$. In order to simplify notation we write $p_i$ for $p_i(\Delta)$ and all $i$.

Note that
\begin{eqnarray}
\label{pi}
p_i=\sum_{|F|=i}\xb_F.
\end{eqnarray}
Let $<$ denote the reverse lexicographical order, and let $J=(p_1,\ldots,p_n)$.  We claim that $x_i^i\in\ini_<(J)$ for $i=1,\ldots,n$. Then this shows that indeed  $\dim S/J=0$.

For $i=1,\ldots,n$, let
\[
g_i=\sum_{j=1}^i(-1)^{j+1}x_i^{i-j}p_j.
\]
We will show that $x_i^i=\ini_<(g_i)$ for $i=1,\ldots,n$.

Observe first  that $g_i$ is homogeneous of degree $i$ and that  $x_i^i\in\supp(g_i)$. In order to complete the proof of the claim, we have to show that if $u$ is a monomial of degree $i$ with $u>x_i^i$,  then $u\not \in \supp(g_i)$.

Let $v$ be a monomial in  the support of $g_i$, $v=x_1^{a_1}\cdots x_n^{a_n}$.  Then $a_j\leq 1$ for all $j\neq i$. Hence if $x_j^a$ divides $u$ for $j\neq i$ and $a>1$, then $u\not \in \supp(g_i)$. Therefore, we may assume that $u=x_Gx_i^k$ with $0\leq k\leq i$, $i\not\in G$ and $|G|=i-k$. Moreover, since $u>x_i^i$ in the reverse lexicographic order, it follows that $G\subset [i-1]$. Hence $k>0$, because  $u$ is of degree $i$.
Then  $u\in  \supp(x_i^kp_{i-k})$ with coefficients $1$, because $x_G\in \supp(p_{i-k})$, and we also have that $u\in \supp(x_{i}^{k-1}p_{i-k+1})$ with coefficient $1$, because $x_Gx_i=x_{G\union\{i\}}$ belongs to $\supp(p_{i-k+1})$.

Since $u$  does not belong  to the support of any other summand $x_i^{i-j}p_j$ of $g_i$, and since the summands in which $u$ appears in the support, have different signs, we conclude that $u\not \in \supp(g_i)$, as desired.

In order to deal with the  general case we observe that
\[
S/(I_\Delta, p_1(\Delta),\ldots, p_d(\Delta))=
S/(I_\Delta, p_1(\Gamma_n),\ldots, p_n(\Gamma_n)).
\]
In particular it follows that $\dim S/(I_\Delta, p_1(\Delta),\ldots, p_d(\Delta))=0$. This yields the desired conclusion.
\end{proof}

\begin{Remark}{\em
Let $p_i\in S$ be defined as in (\ref{pi}), and let $J=(p_1,\ldots,p_n)$. Then for the reverse lexicographical order $<$ we have
\[
\ini_<(J)=(x_1,x_2^2,\ldots,x_i^i,\ldots,x_n^n).
\]
Indeed, Theorem~\ref{standard} implies that $p_1, \ldots, p_n$ is a regular sequence, and since $x_1, x_2^2,\ldots,x_n^n$ is also a regular sequence and since  $\deg x_i^i=\deg p_i$ for $i=1,\ldots,n$, we see that
\[
\length(S/J)=\length(S/(x_1,x_2^2,\ldots,x_n^n)\geq \length(S/\ini_<(J))=\length(S/J).
\]
Hence,  $\length(S/(x_1,x_2^2,\ldots,x_n^n)= \length(S/\ini_<(J))$,    which yields the desired
conclusion.}
\end{Remark}

\begin{Corollary}
\label{sufficient}
Let $e$ be the number of facets $F$  of $\Delta$ with $|F|=d$. Then $K[\Delta]$ is Cohen-Macaulay if and only if
\[
\length(S/(I_\Delta, p_1(\Delta),\ldots,p_d(\Delta)))=d!e.
\]
\end{Corollary}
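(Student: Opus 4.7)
The plan is to apply the graded criterion of Proposition~\ref{graded} to the universal sop produced by Theorem~\ref{standard}. Since $\dim K[\Delta]=d$ and since by Theorem~\ref{standard} the residue classes of $p_1(\Delta),\ldots,p_d(\Delta)$ form a homogeneous sop of $K[\Delta]$ with $\deg p_i(\Delta)=i$, the product of the degrees is $a_1a_2\cdots a_d=1\cdot 2\cdots d=d!$. Hence Proposition~\ref{graded}(b), applied to $R=K[\Delta]$, tells us that $K[\Delta]$ is Cohen--Macaulay if and only if
\[
\length\bigl(K[\Delta]/(p_1(\Delta),\ldots,p_d(\Delta))\bigr)=d!\,e(K[\Delta]).
\]

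Next I would identify the two quantities appearing in this equality with the ones in the statement. On the left, since $(p_1(\Delta),\ldots,p_d(\Delta))K[\Delta]$ is the image in $K[\Delta]=S/I_\Delta$ of the ideal $(I_\Delta,p_1(\Delta),\ldots,p_d(\Delta))$ of $S$, we have
\[
\length\bigl(K[\Delta]/(p_1(\Delta),\ldots,p_d(\Delta))\bigr)=\length\bigl(S/(I_\Delta,p_1(\Delta),\ldots,p_d(\Delta))\bigr).
\]
On the right, I would invoke the standard formula that the multiplicity of the Stanley--Reisner ring equals the number of facets of maximal dimension: $e(K[\Delta])$ equals the number of facets $F$ of $\Delta$ with $|F|=\dim\Delta+1=d$, which is exactly $e$.

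Substituting these two identifications into the equivalence obtained from Proposition~\ref{graded} yields the desired criterion
\[
\length\bigl(S/(I_\Delta,p_1(\Delta),\ldots,p_d(\Delta))\bigr)=d!\,e.
\]
No step here is a serious obstacle: Theorem~\ref{standard} supplies the sop, Proposition~\ref{graded} does the work, and the only external input is the elementary fact that $e(K[\Delta])$ counts the top-dimensional facets, which follows immediately from the decomposition of $K[\Delta]$ into the cones over the facets (or from Hilbert series). The whole argument is essentially a direct application of the machinery already developed in the paper.
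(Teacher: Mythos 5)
Your proof is correct and follows exactly the paper's own argument: apply Theorem~\ref{standard} to get the sop of degrees $1,2,\ldots,d$, invoke Proposition~\ref{graded}(b) with $a_1\cdots a_d=d!$, and identify $e(K[\Delta])$ with the number of top-dimensional facets. The paper states this in two lines; you have merely spelled out the same steps in more detail.
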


\begin{proof}
We note that $e=e(R)$ is the multiplicity of $K[\Delta]$.
Thus the assertion follows from Theorem~\ref{standard} and Proposition~\ref{graded}.
\end{proof}

Let $\Delta$ be a simplicial complex of dimension $d-1$.  While $d!e$ only depends on the simplicial complex $\Delta$, in general the number $\length(S/(I_\Delta, p_1(\Delta),\ldots,p_d(\Delta)))$  also depends  on the characteristic of the field $K$. This is not  surprising since this is also  the case for the Cohen--Macaulay property of $K[\Delta]$. 


\medskip
Corollary~\ref{sufficient} can also be used  as  a computational tool to determine the depth of a Stanley--Reisner ring. We demonstrate this with the following example: consider the  chessboard $\P_n$ of size $n\times n$. The set of non-attacking rooks on $\P_n$ is a simplicial complex  which we denote  $\Delta(\P_n)$. Fix a field $K$. For $n>1$, the Stanley Reisner ring $K[\Delta(\P_n)]$ is  not Cohen--Macaulay. 
Indeed if $n=2$, then $\Delta(\P_n)$ is not connected and hence not Cohen--Macaulay and if $n>2$, then
for any face $F$ with $|F|=n-2$, $\link_{\Delta(\P_n)}(F)=\Delta(\P_2)$. Hence $\Delta(\P_n)$ can not be  Cohen--Macaulay. But what is the depth of $K[\Delta(\P_n)]$?
For $n=2,3$ the depth  can be computed by using  the depth command  implemented in CoCoA. But already for $n=4$,  $\depth K[\Delta(\P_4)]$ can not be computed by CoCoA. Instead we use the  fact, first shown by  Smith \cite[Theorem 4.8]{Sm}, that for any  simplicial complex  $\Delta$ one has
$$
\depth K[\Delta] =\max\{i\: K[\Delta^{(i)}] \text{ is Cohen-Macaulay}\},
$$
where
$
\Delta^{(i)}=\langle F\in \Delta\: |F|=i\rangle
$
is the $i$th skeleton of $\Delta$.

In order to obtain  the Stanley--Reisner ideal of the $(n-1)$-skeleton of $\P_n$ we have to add to $I_{\Delta(\P_n)}$ the monomials corresponding to the facets of $\P_n$.  Then we use  Corollary~\ref{sufficient} to check the Cohen--Macaulayness of the $(n-1)$-skeleton.  For example when $n=4$, the calculation for $I_{\Delta(\P_4)^{(3)}}$ gives
 $$\length(S/(I_{\Delta(\P_4)^{(3)}}, p_1(\Delta(\P_4)^{(3)}),p_2(\Delta(\P_4)^{(3)}), p_3(\Delta(\P_4)^{(3)})))=
6e(K[\Delta(\P_4)^{(3)}]).$$
The length on the left hand side can be computed by means of the  multiplicity command of CoCoA. The output comes almost immediately.
Hence,  $K[\Delta(\P_4)^{(3)}]$ is Cohen--Macaulay and so $\depth K[\Delta(\P_4)]=3$.

\end{document}